\title{Weighted Discriminants and Mass Formulas for Number Fields}
\author{Silas Johnson}
\theoremstyle{plain}
\newtheorem{theorem}{Theorem}
\newtheorem{lemma}[theorem]{Lemma}
\newtheorem{cor}[theorem]{Corollary}
\newtheorem{conj}[theorem]{Conjecture}
\newtheorem{prop}[theorem]{Proposition}
\newtheorem{question}[theorem]{Question}
\theoremstyle{definition}
\newtheorem{defn}[theorem]{Definition}
\newtheorem*{ex}{Example}
\newtheorem*{remark}{Remark}
\newcommand{\C}{\mathbb{C}}
\newcommand{\R}{\mathbb{R}}
\newcommand{\Q}{\mathbb{Q}}
\newcommand{\Z}{\mathbb{Z}}
\newcommand{\gal}{\operatorname{Gal}}
\newcommand{\disc}{\operatorname{Disc}}
\newcommand{\diff}{\mathcal{D}}
\newcommand{\pa}{\mathfrak{p}} % pa is 'prime above'
\newcommand{\cw}{\ensuremath{c_{w}}}
\newcommand{\ow}{\bar{w}}
\newcommand{\cent}{C}
\begin{document}
	
\maketitle

\begin{abstract}
We define the notion of a weighted discriminant and corresponding counting function for number fields, and what it means for these counting functions to have a mass formula for a set of primes.  We extend a result of Kedlaya to show that any proper counting function for a finite group $\Gamma$ has a mass formula for the set of primes not dividing $|\Gamma|$.  We also prove that if $\Gamma$ is an $\ell$-group for some prime $\ell$, then there are only finitely many weighted discriminant counting functions for $\Gamma$-extensions of $\Q$ that have a mass formula for all primes.  Finally, we enumerate all such counting functions for $\Gamma=D_4$ and $\Gamma=Q_8$.
\end{abstract}

\section{Introduction}
\label{sec:intro}

A standard question in arithmetic statistics asks:
\begin{question}
	\label{q:field-counting}
	Let $n \ge 2$ be an integer, and $X > 0$.  Given a finite group $\Gamma$, how many number fields $K$ are there with $[K:\Q] = n$, $\gal(K/\Q) \simeq \Gamma$, and $\disc(K/\Q) < X$?  What is the asymptotic behavior of this quantity as $X \to \infty$?
\end{question}

A natural heuristic assumption to use in counting such fields is that the local completions of such fields at different primes should behave independently of each other.  Furthermore, each possible \'{e}tale extension $K_p/\Q_p$ should occur as the local completion at $p$ with frequency inversely proportion to $|\operatorname{Aut}(K_p/\Q_p)|$.  (Or, alternatively, letting $G_{\Q_p}$ be the absolute Galois group of $\Q_p$ and considering the continuous homomorphism $\phi_p:G_{\Q_p} \to \Gamma$ corresponding to $K_p/\Q_p$, each such $\phi_p$ should occur equally often.)

If this assumption holds, then studying the finite set $S_{\Q_p,\Gamma}$ of maps $\phi_p:G_{\Q_p} \to \Gamma$ can yield deep insights into the asymptotics in Question \ref{q:field-counting}.  We thus seek \textit{mass formulas} that relate the sets $S_{\Q_p,\Gamma}$ to each other for different primes $p$.  If $\Gamma$ has a mass formula that holds for every $p$, then we call this mass formula \textit{universal}; we will define these terms rigorously in Section \ref{sec:CFMF}.

One could also ask Question \ref{q:field-counting}, but with the discriminant replaced by some other invariant.  For example, we could count fields by the Artin conductor of some representation of $\Gamma$ (notably, as in \cite{MMW-MF} and \cite{ASVW-D4} for $\Gamma=D_4$).  The notion of a \textit{counting function} formalizes a set of invariants that are reasonable to substitute for the discriminant.

In this paper, we study a particular type of counting function called a \textit{natural weighted discriminant counting function}, defined in Section \ref{sec:wdisc}.  For a quartic $D_4$ field $L/\Q$ with a quadratic subfield $K$, the discriminant and conductor (of the 2-dimensional irreducible representation of $D_4$) can both be expressed in terms of $\disc(L/K)$ and $\disc(K/\Q)$.  Our weighted discriminant counting functions generalize this idea, building invariants out of discriminants of subextensions.

In particular, we are interested in studying, for a fixed $\Gamma$, which such counting functions have universal mass formulas.  For example, for quartic $D_4$ fields, the discriminant does not have a universal mass formula, but the conductor does, as first studied by Wood in \cite{MMW-MF}.

We will prove later that of all natural weighted discriminant counting functions for $D_4$, only the one corresponding to the conductor has a universal mass formula.  In general, universal mass formulas are rare.  In fact, our main theorem, proved in Section \ref{sec:big-proof}, is:

\begin{theorem}
	\label{thm:finiteMF}
	Let $\Gamma$ be any finite $\ell$-group. Then there are only finitely many positive weighted discriminant counting functions $\Gamma$ which have a universal mass formula.
\end{theorem}

On the other hand, we show in Theorem \ref{thm:mf-coeffs} (which is a slight generalization of a result of Kedlaya \cite{Kedlaya-CF}) that any sufficiently ``nice'' counting function must have a \textit{tame mass formula}, which is almost universal except for possible bad behavior at primes dividing $|\Gamma|$.

Our method of proof for Theorem \ref{thm:finiteMF} will be to understand the behavior of this tame mass formula at $\ell$, then show that in order for it to be universal, certain invariants of the counting function (the \textit{overall weights}, defined in Section \ref{sec:overall-wt}) must be bounded, which will leave only finitely many counting functions that can be universal.

\section{Counting Functions and Mass Formulas}
\label{sec:CFMF}

Fix a finite group $\Gamma$.

Let $S_{\Q_p,\Gamma}$ be the set of continuous homomorphisms $\phi : G_{\Q_p} \to \Gamma$, where $G_{\Q_p}$ denotes the absolute Galois group of $\Q_p$.
We define a \textit{counting function} for $\Gamma$ to be any map
\[
 c : \bigcup_{p\text{ prime}} S_{\Q_p,\Gamma} \to \R
\]
satisfying the following conditions:
\begin{itemize}
\item $c(\phi) = c(\gamma \phi \gamma^{-1})$ for any $\gamma \in \Gamma$ and any $\phi \in S_{\Q_p,\Gamma}$, and
\item $c(\phi) = 0$ whenever $\phi$ is unramified.
\end{itemize}

Furthermore, a counting function is called \textit{proper} if it satisfies the following condition:
Let $p$ and $p'$ be any two primes not dividing $|\Gamma|$, and let $I_{\Q_p}$ and $I_{\Q_{p'}}$ be the absolute inertia groups of $\Q_p$ and $\Q_{p'}$, respectively.
If $\phi:G_{\Q_p} \to \Gamma$ and $\phi':G_{\Q_{p'}} \to \Gamma$ with $\phi(I_{\Q_p}) = \phi'(I_{\Q_{p'}})$, then $c(\phi) = c(\phi')$.
That is, for tame primes, $c$ depends only on the image of the inertia group.

We follow Wood's notation in \cite{MMW-MF} here, except that we allow $c$ to take values in $\R$.
If $c$ takes only values in $\Z_{\ge 0}$, we call it \textit{natural}.

Also as in \cite{MMW-MF},
we define the \textit{total mass at} $p$ of a counting function $c$ as
\[
 M(\Q_p,\Gamma,c) = \frac{1}{|\Gamma|} \sum_{\phi \in S_{\Q_p,\Gamma}} \frac{1}{p^{c(\phi)}}.
\]
Note that this sum is finite, so the right-hand side is well-defined.
Kedlaya \cite{Kedlaya-CF} and Wood \cite{MMW-MF} omit the factor of $\frac{1}{|\Gamma|}$, but we include it for simpilicity later.  We will see in Theorem \ref{thm:mf-coeffs} that for natural counting functions, all the coefficients of the Laurent polynomial $M(\Q_p,\Gamma,c)$ are still integers after dividing by $|\Gamma|$.

\subsection{Mass Formulas}
Define a \textit{character Laurent polynomial} to be a sum
\[
 f(x) = \sum_{i = k_1}^{k_2} \sigma_i(x) x^{-i},
\]
defined for integers $x$, where $k_1,k_2 \in \Z$, and each $\sigma_i$ is a $\Z$-linear combination of \textit{minimal} Dirichlet characters modulo divisors of $|\Gamma|$.
Note that $i$ may take negative values if $k_1 < 0$.

We use the convention that if $\chi$ is a character with modulus $n$ and $(x,n) > 1$, then $\chi(x) = 0$, and we define a \textit{minimal} character as follows:
If $\chi$ is a Dirichlet character of modulus $n$, and there is no other character $\chi'$ with modulus $m < n$ such that $\chi(a) = \chi'(a)$ whenever $a$ is coprime to $mn$, then $\chi$ is minimal.
For example, consider the character $\chi$, with modulus 5, defined as $\chi(x) = 1$ when $5 \nmid x$ and $\chi(x) = 0$ when $5 |x$.  If we define $\chi'(x) = 1$ for all $x$, then $\chi(x) = \chi'(x)$ for all $x$ not divisible by 5, so $\chi$ is not minimal.  All Dirichlet characters in this paper are henceforth assumed to be minimal.

% Old text, replaced with minimal character
%We use the convention that if $\chi$ is a character with modulus $n$ and $(x,n) > 1$, then $\chi(x) = 0$, and we assume that each character has its smallest possible modulus.
%That is, we exclude, for example, the character with $\chi(x) = 1$ when $5 \nmid x$ and $\chi(x) = 0$ when $5 |x$, since it is the same as the character $\chi'(x) = 1$ for all $x$, differing only when $5|x$.
%This is necessary for Theorem \ref{thm:mf-coeffs} to hold in the case where $\Gamma$ is not an $\ell$-group, but it does not affect the value of any $\chi(x)$ where $(x,|\Gamma|) = 1$.

\begin{defn}
If $f$ is a character Laurent polynomial, $c$ is a counting function for $\Gamma$, and $S$ is a set of primes, we say $f$ is an \textit{$S$-mass formula} for $c$ (and that $c$ has a mass formula for $S$) if for all primes $p \in S$, we have
\[
 M(\Q_p,\Gamma,c) = f(p).
\]
\end{defn}

If $S$ is the set of all primes in $\Z$, then we call the mass formula \textit{universal}.  If $S$ contains all primes not dividing $|\Gamma|$, we call the mass formula \textit{tame}.

\begin{defn}
If $f$ is a mass formula in which only the trivial Dirichlet character appears (i.e. $f$ is a Laurent polynomial with integer coefficients), then we call $f$ a \textit{pure mass formula}.
\end{defn}
Our ``pure mass formula'' corresponds to the definition of ``mass formula'' used by Kedlaya and Wood, except that we allow powers of $p$ other than negative integers to appear in $f$, accounting for non-natural counting functions.
The broader definition of a mass formula used here is necessary for the elegant result on tame mass formulas in Theorem \ref{thm:mf-coeffs}.

% Cut this example for length?

\begin{ex}
Let $\Gamma = C_2$.  Then each surjective $\phi \in \bigcup_p S_{\Q_p,\Gamma}$ corresponds to a distinct quadratic extension of $\Q_p$.
Define a counting function $c$ so that $c(\phi)$ is the discriminant exponent (the $p$-adic valuation of the discriminant) of this extension.

This counting function is proper, and it has a universal pure mass formula, as we can verify by computing masses explicitly using \cite{Jones-DB}.
If $p \neq 2$, there are two ramified quadratic extensions of $\Q_p$, each with discriminant exponent 1.
In addition, there is one unramified quadratic extension, and one non-surjective map $G_{\Q_p} \to C_2$ (the trivial map, which is also unramified), so the mass at $p$ is $1 + p^{-1}$.
For $p=2$, there are again two unramified maps $G_{\Q_p} \to C_2$, but now there are two quadratic extensions of $\Q_2$ with discriminant exponent 2, and four quadratic extensions with discrimiant exponent 3.
The mass at 2 is thus $1 + 2^{-2} + 2 \cdot 2^{-3} = 1 + 2^{-1}$.
Since this agrees numerically with the mass at all other primes, the mass formula $f(p) = 1 + p^{-1}$ is universal.
\end{ex}

The following two propositions are extensions of results due to Kedlaya \cite[Corollaries 5.4-5.5]{Kedlaya-CF}:

\begin{prop}
\label{prop:Kedlaya-cong}
Let $a$ be an integer relatively prime to $|\Gamma|$, and let $c$ be any proper counting function for $\Gamma$.  Then $c$ has a pure $S$-mass formula, where $S$ is the set of all primes congruent to $a$ modulo $|\Gamma|$.
\end{prop}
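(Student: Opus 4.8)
The plan is to compute the total mass at a tame prime completely explicitly and observe that fixing the residue of $p$ modulo $|\Gamma|$ freezes every combinatorial quantity that enters the formula. For $p\in S$ we have $p\nmid|\Gamma|$ (the only way a prime $\equiv a\pmod{|\Gamma|}$ could divide $|\Gamma|$ is in the degenerate situation $\gcd(a,|\Gamma|)>1$, in which $S$ is a single prime and the claim is immediate; so we may assume $\gcd(a,|\Gamma|)=1$). For such a $p$, every continuous $\phi\colon G_{\Q_p}\to\Gamma$ is tame, so it factors through the tame quotient $G_{\Q_p}^{\mathrm t}$, which is topologically generated by a Frobenius lift $\sigma$ and a generator $\tau$ of tame inertia subject to the single relation $\sigma\tau\sigma^{-1}=\tau^{p}$. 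First I would invoke this standard description to identify $S_{\Q_p,\Gamma}$ with the set of pairs $(s,t)\in\Gamma\times\Gamma$ satisfying $sts^{-1}=t^{p}$, via $s=\phi(\sigma)$, $t=\phi(\tau)$; here $\phi(I_{\Q_p})=\langle t\rangle$, and $\phi$ is unramified exactly when $t=1$.

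Next, properness enters. Since $c(\phi)$ depends only on $\phi(I_{\Q_p})=\langle t\rangle$ and $c$ is conjugation-invariant, there is a well-defined value $\bar c(\langle t\rangle):=c(\phi)$ depending only on the cyclic subgroup $\langle t\rangle$ (and constant on its conjugates). I would then group the sum defining $M(\Q_p,\Gamma,c)$ by $t=\phi(\tau)$: the number of $\phi$ with a given $t$ is $\#\{s\in\Gamma:sts^{-1}=t^{p}\}$, which, when nonempty, is a coset of $C_\Gamma(t)$, so this count equals $|C_\Gamma(t)|$ if $t^{p}$ is conjugate to $t$ and $0$ otherwise. The crucial observation is that $|t|$ divides $|\Gamma|$, so $p\equiv a\pmod{|\Gamma|}$ forces $t^{p}=t^{a}$; hence all of these counts are independent of $p\in S$, and therefore $M(\Q_p,\Gamma,c)=f(p)$ for a single Laurent polynomial $f$ valid on all of $S$.

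Finally I would repackage $f$ into the required form. Regrouping $t$ over conjugacy classes $K$ of $\Gamma$ — on each of which $\bar c(\langle t\rangle)$ takes a constant value $\bar c(K)$, since conjugate elements generate conjugate subgroups — and using $\sum_{t\in K}|C_\Gamma(t)|=|K|\cdot|C_\Gamma(t_K)|=|\Gamma|$, the normalizing factor $\tfrac1{|\Gamma|}$ cancels and one obtains
\[
M(\Q_p,\Gamma,c)=\sum_{\substack{K\subseteq\Gamma\ \text{conjugacy class}\\ t^{a}\sim t\ \text{for}\ t\in K}} p^{-\bar c(K)}\qquad\text{for every }p\in S,
\]
a Laurent polynomial in $p$ with non-negative integer coefficients in which only the trivial character appears — that is, a pure $S$-mass formula. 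I do not anticipate a genuine obstacle here; the one point requiring care is the local input, namely that $\phi\mapsto(\phi(\sigma),\phi(\tau))$ really is a bijection onto $\{(s,t):sts^{-1}=t^{p}\}$ and that it realizes $\phi(I_{\Q_p})=\langle t\rangle$. Everything after that — the passage to conjugacy classes and the cancellation of $\tfrac1{|\Gamma|}$ — is bookkeeping.
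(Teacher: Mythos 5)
Your main argument is correct and is essentially the paper's own: both proofs use the presentation of the tame quotient of $G_{\Q_p}$ by a Frobenius and a tame inertia generator subject to $\sigma\tau\sigma^{-1}=\tau^{p}$, together with the observation that $t^{p}=t^{a}$ for every $t\in\Gamma$ once $p\equiv a\pmod{|\Gamma|}$, so the $c$-weighted count of pairs $(s,t)$ is the same for every $p\in S$. You simply carry the bookkeeping further and write the resulting pure Laurent polynomial out explicitly (much as the paper does later, in the proof of Theorem \ref{thm:mf-coeffs}), whereas the paper's proof of this proposition stops at the $c$-preserving bijection $S_{\Q_p,\Gamma}\leftrightarrow S_{\Q_q,\Gamma}$; that difference is cosmetic.

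One caveat: your parenthetical disposal of the case $\gcd(a,|\Gamma|)>1$ is not justified. In that case $S$ is empty or consists of a single prime $p$ dividing $|\Gamma|$, and for such a wild prime the claim is not ``immediate'': $M(\Q_p,\Gamma,c)=\tfrac{1}{|\Gamma|}\sum_{\phi}p^{-c(\phi)}$ need not be the value at $p$ of any integer Laurent polynomial, because properness places no constraints on wildly ramified maps and the factor $\tfrac{1}{|\Gamma|}$ need not clear (for instance $\Gamma=C_6$, $a=2$, $p=2$, with $c$ assigned unequal values on the wildly ramified maps, can produce a mass whose denominator is divisible by $3$). This degenerate case is, however, outside what the paper itself proves or uses: its own proof also assumes $p\nmid|\Gamma|$, and the proposition is only ever invoked with $a$ coprime to $|\Gamma|$. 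So your proof of the statement in its intended scope is fine; just delete or correct the ``immediate'' remark rather than rely on it.
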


\begin{prop}
\label{prop:Kedlaya-tame}
Let $c$ be any proper counting function for $\Gamma$.  Then $c$ has a pure tame mass formula if and only if $\Gamma$ has a rational character table.
\end{prop}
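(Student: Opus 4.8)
The plan is to compute the tame local mass of $c$ explicitly as a combinatorial sum over the cyclic subgroups of $\Gamma$, and then to observe when that expression is independent of the residue characteristic. First I would make $S_{\Q_p,\Gamma}$ explicit for $p\nmid|\Gamma|$: any continuous $\phi\colon G_{\Q_p}\to\Gamma$ factors through the tame quotient $\langle\sigma,\tau\mid\sigma\tau\sigma^{-1}=\tau^{p}\rangle$, so $\phi$ amounts to a pair $(s,t)=(\phi(\sigma),\phi(\tau))\in\Gamma^{2}$ with $sts^{-1}=t^{p}$, and then $\phi(I_{\Q_p})=\langle t\rangle$. Sorting the homomorphisms by $H=\phi(I_{\Q_p})$ and using properness (so $c(\phi)$ depends only on $H$, with $c(\{1\})=0$), I would obtain
\[
 M(\Q_p,\Gamma,c)=\frac1{|\Gamma|}\sum_{H}N_p(H)\,p^{-c(H)},\qquad N_p(H)=\sum_{\substack{t\ \text{a generator of}\ H\\ t\,\sim\,t^{p}}}\bigl|C_\Gamma(t)\bigr|,
\]
where $H$ runs over the cyclic subgroups of $\Gamma$, $\sim$ is $\Gamma$-conjugacy, $C_\Gamma(t)$ is the centralizer, and $N_p(\{1\})=|\Gamma|$ counts the unramified homomorphisms. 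Note $N_p(H)$ depends only on $p\bmod|\Gamma|$ and is a conjugation-invariant function of $H$.

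For the direction ``rational character table $\Rightarrow$ pure tame mass formula'', I would use the standard fact that $\Gamma$ has a rational character table if and only if $g\sim g^{j}$ for every $g\in\Gamma$ and every $j$ coprime to the order of $g$. Under this hypothesis $t\sim t^{p}$ holds automatically (since $p\nmid|\Gamma|$), so $N_p(H)=N(H):=\sum_{t\ \text{gen.}\ H}|C_\Gamma(t)|$ is independent of $p$; hence $M(\Q_p,\Gamma,c)=\tfrac1{|\Gamma|}\sum_H N(H)p^{-c(H)}$ is a single Laurent polynomial valid at all tame $p$, and its coefficients are integers by Proposition~\ref{prop:Kedlaya-cong} applied to any one residue class. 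That is the required pure tame mass formula.

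For the converse I would argue by a cancellation-free comparison. Suppose $c$ has a pure tame mass formula $f$. For each residue class $a$ coprime to $|\Gamma|$, Proposition~\ref{prop:Kedlaya-cong} gives a pure mass formula $f_a$ on that class, and $f=f_a$ since the two agree at infinitely many primes; comparing coefficients, the integer $\tfrac1{|\Gamma|}\sum_{c(H)=j}N_a(H)$ must be the same for all $a$ (and all $j$). The crucial observation is the termwise bound $N_a(H)\le N_1(H)=N(H)$: imposing $t\sim t^{a}$ only deletes terms from a sum of positive integers. Hence $\sum_{c(H)=j}\bigl(N_1(H)-N_a(H)\bigr)=0$ with every summand $\ge 0$, so every summand vanishes; letting $j$ range over all values taken by $c$ forces $N_a(H)=N(H)$ for every cyclic $H$ and every $a$, i.e.\ (taking $H=\langle g\rangle$) $g\sim g^{a}$ for every $g$ and every unit $a$ modulo $|\Gamma|$. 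Since any $j$ coprime to the order of $g$ is congruent, modulo that order, to some unit mod $|\Gamma|$ (choose a prime $\equiv j$ not dividing $|\Gamma|$), this says exactly that $\Gamma$ has a rational character table.

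The main obstacle, I expect, will be this uniformity in $c$ in the converse: for a cleverly designed $c$ the grouping of cyclic subgroups by the value of $c$ could a priori let the $p$-dependent parts of the various $N_p(H)$ cancel, and it is the monotonicity $N_a(H)\le N_1(H)$ that rules this out. The other step needing care is the derivation of the structure formula for $M(\Q_p,\Gamma,c)$ at tame primes. Since no step uses that $\Gamma$ is an $\ell$-group, the argument goes through for every finite group $\Gamma$, which is the last assertion of the statement.
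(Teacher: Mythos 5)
Your argument is correct, and it is worth noting that the paper never proves this proposition directly: it cites Kedlaya and instead proves the non-pure generalization, Theorem \ref{thm:mf-coeffs}, from which the proposition is read off. Your computation of the tame mass --- factoring $\phi$ through the tame quotient, parametrizing by pairs $(s,t)$ with $sts^{-1}=t^{p}$, and counting $|C_\Gamma(t)|$ choices of Frobenius for each admissible inertia generator $t$ --- is essentially the same calculation that appears in the paper's proof of Theorem \ref{thm:mf-coeffs}. Where you diverge is in how the group-theoretic criterion is extracted: the paper encodes the $p$-dependence of each coefficient as a sum of Dirichlet characters of $(\Z/n\Z)^{*}$ trivial on the subgroup $S$ corresponding to $[x]\cap\langle x\rangle$ (via Proposition \ref{prop:char-sums}), so that purity forces $S=(\Z/n\Z)^{*}$ class by class, whereas you compare the residue-class formulas directly and use the termwise inequality $N_a(H)\le N_1(H)$ to rule out cancellation among cyclic subgroups sharing the same value of $c$. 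That positivity step is exactly what is needed (a priori a cleverly chosen $c$ could let $p$-dependent contributions cancel when exponents coincide), and it plays the role that linear independence of characters with nonnegative multiplicities plays in the paper's framework. Your route is more elementary, avoiding the character theory of $(\Z/n\Z)^{*}$ entirely, at the cost of invoking the standard equivalence between having a rational character table and the condition $g\sim g^{j}$ for all $j$ prime to $\operatorname{ord}(g)$, which you should cite explicitly; the paper's route buys the stronger statement that a unique tame mass formula always exists with explicitly described character coefficients, of which this proposition is then a corollary. Your remaining points --- integrality of coefficients via Proposition \ref{prop:Kedlaya-cong}, the Dirichlet/CRT step passing from units modulo $|\Gamma|$ to units modulo $\operatorname{ord}(g)$, and the observation that nothing uses an $\ell$-group hypothesis, which accounts for the last sentence of the statement --- are all correct.
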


Kedlaya only considers natural counting functions (using our terminology), so we will prove that Proposition \ref{prop:Kedlaya-cong} extends to non-natural counting functions as well.

\begin{proof}
Let $p$ be a prime congruent to $a$ modulo $|\Gamma|$.  Consider the quotient $G_{\Q_p} / G_{1,\Q_p}$, for $p \nmid |\Gamma|$,
where the latter group is the absolute wild inertia group.  This quotient is a semidirect product of $G_{0,\Q_p}/G_{1,\Q_p}$, the absolute tame inertia group, with $\hat{\Z}$.
Let the topological generators of $G_{0,\Q_p}/G_{1,\Q_p}$ and $\hat{\Z}$ be $s$ and $t$, respectively.
If a continuous homomorphism $\phi:G_{\Q_p} \to \Gamma$ is tamely ramified, then it factors through $G_{\Q_p} / G_{1,\Q_p}$, so we can regard $\phi$ as being completely determined by the images $\sigma$ and $\tau$ of $s$ and $t$, respectively,
and these choices must be satisfy $\tau \sigma \tau^{-1} = \sigma^p$.
Furthermore, if $c$ is a proper counting function, then $c(\phi)$ is determined only by the choice of $\sigma$.

Now suppose $q$ is another prime with $q = p + b \cdot |\Gamma|$, where $b \in \Z$.
Then for any $\sigma \in \Gamma$, we have $\sigma^q = \sigma^p \cdot \sigma^{b \cdot |\Gamma|} = \sigma^p$.
This shows that the pairs $(\sigma,\tau)$ with $\sigma,\tau \in \Gamma$ and $\tau \sigma \tau^{-1} = \sigma^p$ are the same as those with $\tau \sigma \tau^{-1} = \sigma^q$,
and thus there is a one-to-one correspondence between tamely ramified maps in $S_{\Q_p,\Gamma}$ and $S_{\Q_q,\Gamma}$, which preserves the value of any proper counting function $c$.

From this, it follows that the total masses of $c$ at $p$ and $q$ are the same Laurent polynomial in $p$ and $q$, and thus $c$ has a pure mass formula for the set of all primes congruent to $a$ modulo $|\Gamma|$.
\end{proof}

We omit a similar proof for Proposition \ref{prop:Kedlaya-tame}, since we will later generalize this result for non-pure mass formulas.

\begin{remark}
In this paper, we usually discuss global maps $\phi:G_\Q \to \Gamma$, and their restrictions to $G_{\Q_p}$.  However, one could instead consider global extensions $K/\Q$ with Galois group $\Gamma$, and their completions $K_p/\Q_p$ at primes above $p$.  A map from $G_\Q$ to $\Gamma$ is equivalent to a global extension $K/\Q$ along with a choice of isomorphism between $\gal(K/\Q)$ and $\Gamma$, so one can formulate the definition of a mass formula in terms of global field extensions provided the total mass includes a factor of $\frac{1}{|\operatorname{Aut}(K_p)|}$.
	
We use the term ``$\Gamma$-extension'' to refer to either the map or the field extension, taking the isomorphism $\gal(K/\Q) \to \Gamma$ to be implicit.

By way of notation, if $K/\Q$ is a $\Gamma$-extension and $H$ is a subgroup of $\Gamma$, then $K_H$ will denote the fixed field of $H$ in $K$.
\end{remark}

\section{Weighted discriminants}
\label{sec:wdisc}

We use the term \textit{alternate discriminant} to refer to a real-valued function on the set of $\Gamma$-extensions (in the sense of maps $\phi:G_\Q \to \Gamma$) of $\Q$.
A ``reasonable'' alternate discriminant, broadly speaking, should take positive integer values, and its valuation at each rational prime $p$ should be determined by the restriction of $\phi$ to $G_{\Q_p}$.

If we require alternate discriminants to be ``determined locally'' in this way, then there is a natural bijection between alternate discriminants and counting functions.
Given a counting function $c$ for $\Gamma$, we can build an alternate discriminant corresponding to $c$ as follows:
Let $\phi : G_\Q \to \Gamma$ be a $\Gamma$-extension.
Then letting $\phi_p$ be the restriction of $\phi$ to $G_{\Q_p}$, we define
\[
 D_c(\phi) = \prod_p p^{c(\phi_p)}.
\]
Conversely, if an alternate discriminant $D$ is determined locally, then we can construct a counting function $c$ corresponding to it.
If $\phi_p$ is the restriction of some $\Gamma$-extension to $G_{\Q_p}$, then $c(\phi_p)$ is the power of $p$ appearing in $D(\phi)$.

However, from the perspective of searching for universal mass formulas, this broad class of invariants is not very interesting, even if we require alternate discriminants to be determined locally and counting functions to be proper.
As we will see in Theorem \ref{thm:mf-coeffs}, any proper counting function $c$ is guaranteed to have a tame mass formula.
Then, since the condition of properness imposes no restrictions on how the counting function can behave at primes dividing $|\Gamma|$,
we can assign values of $c$ in such a way that it forces the tame mass formula to be universal.

Thus, we seek a natural way to define counting functions (or alternate discriminants) globally, and prohibit entirely contrived behavior at wild primes.
To that end, in this paper we consider \textit{weighted discriminants}, a class of alternate discriminants generalizing Wood's work in \cite{MMW-MF}.
A weighted discriminant for $K/\Q$ is built by taking the discriminants of intermediate extensions, norming down to $\Q$, and raising each to a power (the \textit{weight}) associated to that intermediate extension.  Specifically:

\begin{defn}
	A \textit{weight function} for $\Gamma$ is a function
	$w: \{ (H,H') \} \to \R$,
	where the domain of $w$ is the set of ordered pairs $(H,H')$, where $H$ is a subgroup of $\Gamma$ and $H'$ is a maximal subgroup of $H$.
	
	The \textit{weighted discriminant} given by a weight function $w$ is
	\[
	D_{w}(K) = \prod_{(H,H')} N_{K_H/\Q} (\disc(K_{H'} / K_H))^{w(H,H')},
	\]
	where $\disc$ is the standard relative discriminant and $N$ is the norm.
\end{defn}

Since $N_{K_H/\Q} (\disc(K_{H'} / K_H))^{w(H,H')}$ can be determined locally from the ramification groups of $K/\Q$,
$D_w$ is an alternate discriminant that can be defined in terms of a counting function $\cw$.
We call a counting function of this form a \textit{weighted discriminant counting function}.

If $w(H,H') \in \Z_{\ge 0}$ for each $(H,H')$, we call $w$ \textit{positive integral}.
If $w$ is positive integral, then its counting function $\cw$ is natural, but the converse need not hold.
See Section \ref{subsec:mfq8} for an example of a non-integer-valued weight function whose counting function is nonetheless natural.

\begin{remark}
When computing $D_{w}(K)$, we take as implicit an isomorphism between $\gal(K/\Q)$ and $\Gamma$.  Changing this isomorphism by composing with an outer automorphism of $\Gamma$ may change the value of $D_{w}(K)$, but composing with an inner automorphism will not.
\end{remark}

% Cut this remark for length?

\begin{remark}
It is possible for two different weight functions to give the same counting function.
For example, let $\Gamma = C_2 \times C_2$, and let $H_1$, $H_2$, and $H_3$ be its order-2 subgroups, with $1$ denoting the trivial subgroup.
If we let $w$ be the weight function with $w(\Gamma,H_1) = 2$ and all other weights equal to 0, and $w'$ be the weight function with $w(H_2,1) = w(H_3,1) = 1$ and all other weights zero,
then $c_w = c_w'$.
\end{remark}

\section{An Explicit Formula for $\cw$}
\label{sec:wcf}

In this section, we give an explicit formula for $\cw(\phi)$ in terms of the weight function $w$ and the ramification groups of the map $\phi$,
which we will use in the proof of Theorem \ref{thm:finiteMF}.

Let $\phi:G_\Q \to \Gamma$ be a continuous homomorphism, and let $K/\Q$ be the corresponding field extension.
If $\pa$ is a prime of $K$ above $p$, we denote by $I_{\pa,i}$ the $i$th ramification group in lower numbering at $\pa$, for the extension $K/\Q$.
As in \cite{serre-LF}, $i=0$ and $i=-1$ correspond to the inertia and decomposition groups, respectively.
Throughout this section, $\disc$ denotes the standard discriminant ideal, and $\diff$ the different ideal.

Let $H \le \Gamma$, and $H'$ be a maximal subgroup of $H$.  Recall that $K_H$ and $K_{H'}$ denote the fixed fields of $H$ and $H'$ in $K$.

Using the fact that the discriminant of a field extension is the norm of the different ideal, and that
\[
 \disc K/K_H = N_{K_{H'}/K_H} ( \disc K/H') \cdot (\disc K_{H'}/K_H)^{|H'|},
\]
we first obtain
\[
 \disc K_{H'}/K_H = \left( \frac{N_{K/K_H} \diff(K/K_H)}{N_{K/K_H} \diff(K/K_{H'})} \right)^{\frac{1}{|H'|}}.
\]
Norming down to $\Q$ gives:
\[
 N_{K_H/\Q} (\disc(K_{H'} / K_H)) = N_{K/\Q} \left( \frac{\diff(K/K_H)}{\diff(K/K_{H'})} \right)^{\frac{1}{|H'|}}.
\]
Now we take the valuation at $p$ of both sides, and use the fact that if $\pa$ is a prime above $p$ and $K/\Q$ is Galois,
then $N_{K/\Q}(\pa) = p^{f_{K/\Q}(p)}$, where $f$ denotes the degree of the residue field extension $K_\pa/\Q_p$.  This yields:
\[
 v_p(N_{K_H/\Q} (\disc(K_{H'} / K_H))) = \frac{f_{K/\Q}(p)}{|H'|} \sum_{\pa | p} \left( v_\pa (\diff(K/H)) - v_\pa (\diff(K/H')) \right).
\]
Using the formula in \cite{serre-LF} for the different in terms of the ramification groups of an extension,
and that $f_{K/\Q}(p) = \frac{|I_{p,-1}|}{|I_{p,0}|}$, the right side becomes
\[
 \frac{|I_{p,-1}|}{|I_{p,0}| \cdot |H'|} \sum_{\pa | p} \left[ \sum_{i \ge 0} \left( |I_{\pa,i} \cap H| - |I_{\pa,i} \cap H'| \right) \right].
\]

Now choose any prime $\pa$ above $p$.
The ramification groups of the other primes above $p$ are conjugates of $I_{\pa,i}$.
There are $|\Gamma|/|I_{\pa,-1}|$ of these, so we can rewrite the previous line as
\[
 \frac{|I_{\pa,-1}|}{|I_{\pa,0}| \cdot |H'|} \cdot \frac{1}{|I_{\pa,-1}|} \sum_{\gamma \in \Gamma} \left[ \sum_{i \ge 0} \left(
|\gamma I_{\pa,i} \gamma^{-1} \cap H| - |\gamma I_{\pa,i} \gamma^{-1} \cap H'| \right) \right].
\]
If $\phi : G_\Q \to \Gamma$ is a continuous homomorphism with inertia groups $I_{p,i}$, and $\phi_p$ is its restriction to $G_{\Q_p}$, then we set
\[
 c_{H,H'}(\phi_p) := \frac{1}{|I_{\pa,0}| \cdot |H'|} \cdot \sum_{\gamma \in \Gamma} \left[ \sum_{i \ge 0} \left(
|\gamma I_{\pa,i} \gamma^{-1} \cap H| - |\gamma I_{\pa,i} \gamma^{-1} \cap H'| \right) \right].
\]
This expression does not depend on the choice of $\pa$, since we sum over all conjugates of $I_{\pa,i}$.

Now for any weight function $w$ with corresponding weighted discriminant $D_{w}$, define a counting function $\cw$ by
\[
 \cw(\phi_p) = \sum_{(H,H')} c_{H,H'}(\phi_p) \cdot w(H,H').
\]
Let $\phi:G_\Q \to \Gamma$, with $\phi_p$ the restriction of $\phi$ to $G_{\Q_p}$.  Since
\[
 \cw(\phi_p) = \sum_{(H,H')} v_p(N_{K_H/\Q} (\disc(K_{H'} / K_H))) \cdot w(H,H'),
\]
we have
\begin{align*}
D_{\cw}(K) &= \prod_p p^{\cw(\phi_p)} \\
& = \prod_p \prod_{(H,H')} p^{v_p(N_{K_H/\Q} (\disc(K_{H'} / K_H))) \cdot w(H,H')} \\
& = \prod_{(H,H')} N_{K_H/\Q} (\disc(K_{H'} / K_H))^{w(H,H')} \\
& = D_w(K).
\end{align*}

Thus if $c_{H,H'}$ and $\cw$ are defined as above, then $\cw$ is the counting function corresponding to the weighted discriminant $D_w$.

If $p \nmid |\Gamma|$, then $\cw$ depends only on the inertia groups $I_{\pa,0}$, and in particular not on the decomposition group.
This implies:

\begin{cor}
\label{cor:wts-cf}
Given any weight function $w$, the corresponding counting function $\cw$ is proper.
\end{cor}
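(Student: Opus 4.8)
The plan is to read off properness directly from the explicit formula for $c_{H,H'}$ derived above, once we observe that higher ramification vanishes at tame primes. First I would recall that if $p \nmid |\Gamma|$ and $\pa$ is a prime of $K$ above $p$, then the wild inertia group $I_{\pa,1}$ is a pro-$p$ subgroup of $\Gamma$, hence trivial; consequently $I_{\pa,i} = 1$ for every $i \ge 1$, so in the inner sum $\sum_{i \ge 0}$ only the term $i = 0$ survives. This reduces the formula to
\[
 c_{H,H'}(\phi_p) = \frac{1}{|I_{\pa,0}| \cdot |H'|} \sum_{\gamma \in \Gamma} \left( |\gamma I_{\pa,0} \gamma^{-1} \cap H| - |\gamma I_{\pa,0} \gamma^{-1} \cap H'| \right).
\]

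Next I would note that, with the pair $(H,H')$ fixed, the right-hand side depends only on the conjugacy class of the subgroup $I_{\pa,0}$ in $\Gamma$: replacing $I_{\pa,0}$ by any conjugate and reindexing the sum over $\gamma$ leaves the value unchanged. Since $\phi(I_{\Q_p})$ is exactly $I_{\pa,0}$ for a suitable choice of $\pa$ — and different choices of $\pa$, or of an embedding $\overline{\Q} \hookrightarrow \overline{\Q_p}$, only replace it by a conjugate — it follows that $c_{H,H'}(\phi_p)$, and hence $\cw(\phi_p) = \sum_{(H,H')} c_{H,H'}(\phi_p) \, w(H,H')$, depends only on $\phi(I_{\Q_p})$ (indeed only on its conjugacy class), and in particular not on the decomposition group.

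Finally, given $\phi : G_{\Q_p} \to \Gamma$ and $\phi' : G_{\Q_{p'}} \to \Gamma$ with $p, p' \nmid |\Gamma|$ and $\phi(I_{\Q_p}) = \phi'(I_{\Q_{p'}})$, the previous step gives $c_{H,H'}(\phi) = c_{H,H'}(\phi')$ for every pair $(H,H')$, whence $\cw(\phi) = \cw(\phi')$; this is precisely the definition of $\cw$ being proper. I do not anticipate a real obstacle: the only points needing attention are the vanishing of $I_{\pa,i}$ for $i \ge 1$ at tame primes and the routine verification that the displayed sum is conjugation-invariant in $I_{\pa,0}$.
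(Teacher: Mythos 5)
Your proposal is correct and follows essentially the same route as the paper: the paper deduces the corollary directly from the explicit formula by observing that for $p \nmid |\Gamma|$ the higher ramification groups vanish, so $\cw(\phi_p)$ depends only on $I_{\pa,0}$, i.e.\ on the image of inertia, and not on the decomposition group. You simply spell out the details (triviality of the wild part since the image of wild inertia is a $p$-group in a group of order prime to $p$, and conjugation-invariance of the sum over $\gamma \in \Gamma$), which is exactly the intended argument.
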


\section{The Overall Weight of a Subgroup}
\label{sec:overall-wt}

It is possible for several weight functions to give the same weighted discriminant counting function.
We can now use the explicit formula in Section \ref{sec:wcf} to determine when this happens, and in fact show that such a counting function actually depends on a smaller set of parameters, which we call the \textit{overall weights}.

\begin{defn}
	Let $w$ be a weight function for $\Gamma$, and let $I \subseteq \Gamma$ be any subgroup of $\Gamma$.  The \textit{overall weight} of $I$ is the quantity
	\begin{equation}
		\label{eqn:overall-defn}
		\ow(I) = \sum_{(H,H')} \left( \frac{w(H,H')}{|I| \cdot |H'|} \sum_{\gamma \in \Gamma} \left( |\gamma I \gamma^{-1} \cap H| - |\gamma I \gamma^{-1} \cap H'| \right) \right),
	\end{equation}
	where the first sum, as usual, ranges over pairs where $H \subseteq \Gamma$ and $H'$ is a maximal subgroup of $H$.
\end{defn}

Note that if $I$ and $I'$ are conjugate in $\Gamma$, then $\ow(I) = \ow(I')$, so it makes sense to speak of the overall weight of a conjugacy class of subgroups.
More importantly, if $\phi_p:G_{\Q_p} \to \Gamma$ with ramification groups $I_{p,i}$, then the explicit formula in Section \ref{sec:wcf} reduces to
\[
\cw(\phi_p) = \sum_{i \ge 0} \frac{|I_{p,i}|}{|I_{p,0}|} \ow(I_{p,i}).
\]
This shows that the counting function attached to $w$ depends only on the overall weights.
That is, if $w_1$ and $w_2$ are weight functions and $\ow_1(I) = \ow_2(I)$ for every subgroup $I \subseteq \Gamma$, then $c_{w_1} = c_{w_2}$ (and thus $D_{w_1} = D_{w_2}$).

In fact, we can go even further:

\begin{prop}
	\label{prop:overall-only}
	If $w$ is a weight function for $\Gamma$, then the overall weights of noncyclic subgroups of $\Gamma$ can be expressed in terms of the overall weights of cyclic subgroups.  In particular, $c_w$ and $D_w$ are completely determined by the overall weights of cyclic subgroups.
\end{prop}

\begin{proof}
	Consider the quantity $|\gamma I \gamma^{-1} \cap H|$ in the formula for the overall weight.  Since $I$ is the union of its cyclic subgroups, we have
	\[
	\gamma I \gamma^{-1} \cap H = \bigcup_{C \subseteq I} \gamma C \gamma^{-1} \cap H,
	\]
	where the sum ranges over all cyclic subgroups of $I$.  By the principle of inclusion-exclusion, it follows that
	\[
	|\gamma I \gamma^{-1} \cap H| = \sum_{n \ge 1} \left( (-1)^{n+1} \sum_{C_1,\ldots,C_n \subseteq I} |\gamma (C_1 \cap \ldots \cap C_n) \gamma^{-1} \cap H| \right),
	\]
	where the inner sum ranges over all unordered $n$-tuples $(C_1, \ldots, C_n)$ of distinct cyclic subgroups of $I$.  For ease of notation, if $\mathcal{C}$ is any such unordered $n$-tuple, let $|\gamma (C_1 \cap \ldots \cap C_n) \gamma^{-1} \cap H| = u(\gamma, H, \mathcal{C})$, and note that the formula above does not require $H$ to be a subgroup of $\Gamma$, only a subset.
	
	We can now express the overall weight of $I$ as
	\[
	\ow(I) = \sum_{(H,H')} \left( \frac{w(H,H')}{|I| \cdot |H'|} \sum_{\gamma \in \Gamma} \sum_{n \ge 1}   \sum_{\mathcal{C}} (-1)^{n+1} u(\gamma, H \setminus H', \mathcal{C})  \right),
	\]
	where the innermost sum is the same as in the previous line.
	Then we can rearrange the sums to obtain
	\[
	\ow(I) = \sum_{n \ge 1} \, \sum_{\mathcal{C}} \, \sum_{(H,H')} \left( \frac{w(H,H')}{|I| \cdot |H'|} \sum_{\gamma \in \Gamma}  (-1)^{n+1} u(\gamma, H \setminus H', \mathcal{C}) \right).
	\]
	Finally, if $\mathcal{C} = (C_1, \ldots, C_n)$, then
	\[
	\ow(C_1 \cap \ldots \cap C_n) = \sum_{(H,H')} \left( \frac{w(H,H')}{|C_1 \cap \ldots \cap C_n| \cdot |H'|} \sum_{\gamma \in \Gamma}  u(\gamma, H \setminus H', \mathcal{C})  \right),
	\]
	and therefore
	\begin{equation}
		\label{eqn:overall-cyclic} 
		\ow(I) = \sum_{n \ge 1} \, \sum_{C_1,\ldots,C_n \subseteq I} \, \left( (-1)^{n+1} \frac{|C_1 \cap \ldots \cap C_n|}{|I|} \cdot \ow \left( C_1 \cap \ldots \cap C_n \right) \right).
	\end{equation}
	
	Since $C_1 \cap \ldots \cap C_n$ is itself cyclic, this expresses $\ow(I)$ in terms of the overall weights of cyclic groups, as desired.
	%(Note that the outer sum converges, since the overall weight of the trivial group must always be 0, from equation \ref{eqn:overall-defn}.)
\end{proof}

A converse of Proposition \ref{prop:overall-only} also holds:

\begin{prop}
	\label{prop:any-overall}
	For any choice of one real number for each conjugacy class of nontrivial cyclic subgroups of $\Gamma$, there is a weight function $w$ such that for any nontrivial cyclic subgroup $C \subseteq \Gamma$,
	$\ow(C)$ is the real number assigned to the conjugacy class of $C$.
\end{prop}

The proof of Proposition \ref{prop:any-overall} is long and technical, so we postpone it to Section \ref{sec:any-overall-proof}.

These two results together imply:

\begin{cor}
	\label{cor:dimension-of-cfs}
	The set of (not necessarily natural) weighted discriminant counting functions for a finite group $\Gamma$ can be viewed as a vector space over $\R$,
	with basis vectors corresponding to the conjugacy classes of cyclic subgroups of $\Gamma$.
\end{cor}

This suggests that we should think of a weighted discriminant counting function as being determined by the choice of overall weights $\ow(C)$, rather than the weights $w(H,H')$.
The overall weights are also related to whether or not $\cw$ is natural:

\begin{prop}
	\label{prop:overall-natural}
	If $\cw$ is natural, then all the overall weights of cyclic subgroups given by $w$ are nonnegative integers.
\end{prop}

\begin{proof}
	First, suppose that $\cw$ is natural.
	Let $C$ be any cyclic subgroup of $\Gamma$.
	Recall from the proof of Proposition \ref{prop:Kedlaya-tame} that if $\ell$ is a prime not dividing $|\Gamma|$, then maps $G_{\Q_\ell} \to \Gamma$ are determined by a pair $(s,t)$ in $\Gamma$ with $tst^{-1} = s^\ell$.
	If $\ell \equiv 1 \bmod |\Gamma|$, then $s^\ell = s$.
	By letting $s$ be a generator of $C$ and $t = 1_\Gamma$, we can construct a tamely ramified map $\phi:G_{\Q_\ell} \to \Gamma$ in which the image of inertia is $C$.
	Then $\cw(\phi) = \ow(C)$, so $\ow(C)$ must be a nonnegative integer.
\end{proof}

Looking at counting functions through the lens of overall weights reveals one more fact, which will be useful in the proof of Theorem \ref{thm:finiteMF}:

\begin{cor}
	\label{cor:natural-double}
	Let $\phi_p:G_{\Q_p} \to \Gamma$ and $\phi'_\ell:G_{\Q_\ell} \to \Gamma$, with $p \neq \ell$,
	and let $I_{\ell,i}(\phi'_\ell)$ and $I_{p,i}(\phi_p)$ denote the $i$th ramification groups of $\phi'_\ell$ and $\phi_p$.  If
	\begin{itemize}
		\item $\Gamma$ is an $\ell$-group,
		\item $c$ is a natural weighted discriminant counting function for $\Gamma$, and
		\item $I_{\ell,0}(\phi'_\ell) = I_{p,0}(\phi_p)$,
	\end{itemize}
	then $\cw(\phi'_\ell) \ge 2 \cw(\phi_p)$.
\end{cor}

\begin{proof}
	Since $\phi_p$ is tamely ramified, the inertia group $I_{p,0}(\phi_p)$ is cyclic.
	Since $\Gamma$ is an $\ell$-group, we have $I_{\ell,0}(\phi'_\ell) = I_{\ell,1}(\phi'_\ell) = I_{p,0}(\phi_p)$.
	Furthermore, all of the higher ramification groups $I_{\ell,i}(\phi'_\ell)$ must be cyclic as well, so their overall weights are nonnegative, by Proposition \ref{prop:overall-natural}.
	Thus
	\begin{align*}
		\cw(\phi'_\ell) &= \sum_{i \ge 0} \frac{|I_{\ell,i}(\phi'_\ell)|}{|I_{\ell,0}(\phi'_\ell)|} \ow(I_{\ell,i}(\phi'_\ell)) \\
		& \ge \sum_{i = 0}^1 \frac{|I_{\ell,i}(\phi'_\ell)|}{|I_{\ell,0}(\phi'_\ell)|} \ow(I_{\ell,i}(\phi'_\ell)) \\
		& = 2 \ow \left( I_{\ell,0}(\phi'_\ell) \right) \\
		& = 2 \ow \left( I_{p,0}(\phi_p) \right) \\
		& = 2 \cw(\phi_p).
	\end{align*}
\end{proof}

\begin{remark}
	The overall weight of a subgroup generalizes Malle's \textit{index} in \cite{malle:distro2}.
	In particular, if $w$ is the weight function corresponding to the standard discriminant, then the overall weight of each subgroup is
	equal to the index of its generator, viewing $\Gamma$ via its regular representation.
\end{remark}

\section{Tame Mass Formulas and Their Coefficients}
\label{sec:bridge}

In this section, we prove a more general form of Proposition \ref{prop:Kedlaya-tame} for non-pure mass formulas:

\begin{theorem}
	\label{thm:mf-coeffs}
	Any proper natural counting function $c$ has exactly one tame mass formula.
	The tame mass formula is of the form
	\[
	f(x) = \sum_C \sigma_C(x) x^{-i_C},
	\]
	where the sum ranges over conjugacy classes of cyclic subgroups $C \subseteq \Gamma$.
	Each ``coefficient'' $\sigma_C$ is a sum of distinct Dirichlet characters modulo divisors of $|\Gamma|$, one of which is the trivial character.
	
	Furthermore, if $c$ is a weighted discriminant counting function with weight function $w$, then $i_C = \ow(C)$.
\end{theorem}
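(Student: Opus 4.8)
The plan is to compute the tame total mass directly --- following the strategy from the proof of Proposition~\ref{prop:Kedlaya-cong}, but without assuming anything about the character table of $\Gamma$ --- and to read off the Dirichlet characters that survive; uniqueness is then handled by a separate analytic argument. For $p \nmid |\Gamma|$, a homomorphism $\phi : G_{\Q_p} \to \Gamma$ is the same data as a pair $(\sigma,\tau) \in \Gamma^2$ with $\tau\sigma\tau^{-1} = \sigma^p$, where $\sigma = \phi(s)$ and $\tau = \phi(t)$; and since $c$ is proper, $c(\phi)$ depends only on $\phi(I_{\Q_p}) = \langle\sigma\rangle$. Write $c(C)$ for this value on a cyclic subgroup $C$, which by conjugation-invariance depends only on the conjugacy class of $C$. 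I would partition the sum defining $M(\Q_p,\Gamma,c)$ first by the cyclic subgroup $C = \langle\sigma\rangle$ and then into conjugacy classes. For a fixed generator $\sigma$ of a representative $C$ of order $m$, any $\tau$ with $\tau\sigma\tau^{-1} = \sigma^p$ normalizes $C$ and induces on it the automorphism $x \mapsto x^p$; such $\tau$ therefore exist exactly when $p \bmod m$ lies in the image $W_C \le \operatorname{Aut}(C) = (\Z/m)^{\times}$ of the normalizer $N_\Gamma(C)$, in which case they form one coset of the centralizer $Z_\Gamma(C)$. Since $C$ has $\varphi(m)$ generators, its conjugacy class contains $|\Gamma|/|N_\Gamma(C)|$ subgroups, and $|N_\Gamma(C)|/|Z_\Gamma(C)| = |W_C|$, so every group-order factor should cancel against the $1/|\Gamma|$, leaving
\[
M(\Q_p,\Gamma,c) = \sum_{[C]} \frac{\varphi(m)}{|W_C|}\,\mathbf{1}_{W_C}(p)\, p^{-c(C)},
\]
the sum over conjugacy classes of cyclic subgroups, with $m = |C|$ and $\mathbf{1}_{W_C}$ the indicator of $W_C$ inside $(\Z/m)^{\times}$.

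The one genuinely new step is to expand that indicator: by orthogonality of the characters of $(\Z/m)^{\times}$, $\mathbf{1}_{W_C} = \frac{|W_C|}{\varphi(m)}\sum_{\chi}\chi$, the sum over the $[(\Z/m)^{\times}:W_C]$ distinct Dirichlet characters mod $m$ that are trivial on $W_C$, a set which always contains the trivial character. Substituting, the factor $\varphi(m)/|W_C|$ cancels and we get $M(\Q_p,\Gamma,c) = f(p)$ with $f(x) = \sum_{[C]}\sigma_C(x)\,x^{-c(C)}$ and $\sigma_C = \sum_{\chi|_{W_C}=1}\chi$. After reducing each $\chi$ to its minimal modulus as in the paper's convention, $\sigma_C$ is a sum of distinct characters modulo the divisor $m$ of $|\Gamma|$ and contains the trivial character, so this establishes existence and the asserted form with $i_C = c(C)$. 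It also recovers Proposition~\ref{prop:Kedlaya-tame}: $\sigma_C$ is the trivial character alone for every $C$ precisely when $W_C = (\Z/m)^{\times}$ for all $C$, which is the condition that $\Gamma$ have a rational character table.

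For uniqueness it suffices to show that a character Laurent polynomial $g$ with $g(p) = 0$ for all primes $p \nmid |\Gamma|$ is formally zero; applying this to the difference of two tame mass formulas finishes the proof. Write $g(x) = \sum_i \sigma_i(x)x^{-i}$ and fix a residue $a \in (\Z/|\Gamma|)^{\times}$. Every character appearing in the $\sigma_i$ takes a constant value on primes $p \equiv a \pmod{|\Gamma|}$, so $p \mapsto g(p)$ agrees with the complex-coefficient Laurent polynomial $\sum_i \sigma_i(a)x^{-i}$ at the infinitely many such primes guaranteed by Dirichlet's theorem, forcing $\sigma_i(a) = 0$ for every $i$. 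Letting $a$ range over $(\Z/|\Gamma|)^{\times}$, each $\sigma_i$ --- a $\Z$-linear combination of primitive characters of conductor dividing $|\Gamma|$ --- vanishes identically on $(\Z/|\Gamma|)^{\times}$, hence is zero by orthogonality of the characters of $(\Z/|\Gamma|)^{\times}$.

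The main obstacle is getting the two displayed identities exactly right. The first is bookkeeping with the orbit- and centralizer-counting factors so that precisely $\varphi(m)/|W_C|$ survives; the second rests on recognizing that the relevant object is the indicator of the image of $N_\Gamma(C)$ in $\operatorname{Aut}(C)$, after which the character expansion is routine. The only other delicate point is the handling of minimal moduli, which is what upgrades ``a tame mass formula'' to ``exactly one'': it makes uniqueness a statement about the formal character Laurent polynomial rather than merely about its values at tame primes.
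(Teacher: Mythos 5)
Your proposal is correct and follows essentially the same route as the paper: the heart of both arguments is the count of pairs $(s,t)$ with $sts^{-1}=t^p$, organized by conjugacy classes of cyclic subgroups, with the subgroup you call $W_C\subseteq(\Z/m\Z)^\times$ playing exactly the role of the paper's $S$, and the coefficient identified as the sum of characters trivial on it (the paper's Proposition \ref{prop:char-sums}); your uniqueness argument via constancy on residue classes, Dirichlet's theorem, and linear independence of minimal-modulus characters is likewise the paper's interpolation argument applied to a difference. The only difference is organizational: you get existence by computing the mass in closed form directly rather than routing through Proposition \ref{prop:Kedlaya-cong} first, which changes nothing essential.
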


\begin{remark}
	Proposition \ref{prop:Kedlaya-tame}, in this context, implies that the mass formula given by Theorem \ref{thm:mf-coeffs} is pure if and only if $\Gamma$ has a rational character table.
\end{remark}

We will need the following fact from representation theory:
\begin{prop}
\label{prop:char-sums}
Let $A$ be an abelian group, and $B$ a subgroup of $A$.  Let $\Sigma$ be the sum of all irreducible characters of $A$ that are trivial on $B$.  Then
\[
 \Sigma(a) = \begin{cases} 0 & \text{if } a \notin B \\ [A:B] & \text{if } a \in B. \end{cases}
\]
\end{prop}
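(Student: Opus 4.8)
The plan is to reduce the statement to the standard column orthogonality relation for characters of the finite abelian group $A/B$. First I would observe that an irreducible character $\chi$ of $A$ is trivial on $B$ precisely when it factors through the quotient map $\pi : A \to A/B$, i.e. $\chi = \bar\chi \circ \pi$ for a unique character $\bar\chi$ of $A/B$; moreover $\chi \mapsto \bar\chi$ is a bijection from the set of irreducible characters of $A$ trivial on $B$ onto the full character group $\widehat{A/B}$. Hence
\[
 \sigma(a) = \sum_{\bar\chi \in \widehat{A/B}} \bar\chi(\pi(a)),
\]
so it suffices to prove the generic fact: for a finite abelian group $G$ and $g \in G$, the sum $\sum_{\psi \in \widehat{G}} \psi(g)$ equals $|G|$ if $g = e_G$ and $0$ otherwise. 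Specializing to $G = A/B$ and $g = \pi(a)$ then finishes the proof, since $\pi(a) = e_G$ exactly when $a \in B$, and $|A/B| = [A:B]$.

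For the generic fact, the case $g = e_G$ is immediate: each $\psi(e_G) = 1$, and $|\widehat{G}| = |G|$ for a finite abelian group, so the sum is $|G|$. For $g \neq e_G$ I would use that $\widehat{G}$ separates points of $G$, so there is $\psi_0 \in \widehat{G}$ with $\psi_0(g) \neq 1$. Since $\psi \mapsto \psi_0\psi$ permutes $\widehat{G}$, we get
\[
 \psi_0(g) \sum_{\psi \in \widehat{G}} \psi(g) = \sum_{\psi \in \widehat{G}} (\psi_0\psi)(g) = \sum_{\psi \in \widehat{G}} \psi(g),
\]
whence $\bigl(\psi_0(g) - 1\bigr)\sum_{\psi} \psi(g) = 0$ and therefore $\sum_{\psi} \psi(g) = 0$.

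I do not expect any genuine obstacle here; the only step that deserves a word of justification is that the character group of a finite abelian group separates points, which follows either from the structure theorem (decompose $G$ into cyclic factors and take the evident characters) or from Pontryagin duality for finite abelian groups ($\widehat{\widehat{G}} \cong G$ together with $|\widehat{G}| = |G|$). Both are standard and can simply be cited. The identification of characters trivial on $B$ with $\widehat{A/B}$ is also elementary, being just the universal property of the quotient applied to homomorphisms into $\C^\times$.
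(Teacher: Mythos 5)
Your proof is correct. The paper itself gives no argument for this proposition --- it is quoted as a standard fact from representation theory and used as a black box in the proof of Theorem \ref{thm:mf-coeffs} --- so there is nothing to diverge from; your route (identifying the characters of $A$ trivial on $B$ with $\widehat{A/B}$ via the quotient map, then applying the translation trick to get $\sum_{\psi \in \widehat{G}} \psi(g) = 0$ for $g \neq e$) is exactly the standard argument one would supply. The only point worth making explicit is that $A$ must be finite (or at least that one works with finitely many characters and $[A:B] < \infty$), which you use when invoking $|\widehat{G}| = |G|$ and separation of points; this is harmless here, since in the paper the proposition is only applied to $A = (\Z/n\Z)^*$ with $B = S$, a finite setting.
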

Briefly, this statement follows from the fact that characters of $A$ trivial on $B$ correspond to characters of $A/B$, and summing over all such characters gives $|A/B|$ on $1_{A/B}$ and 0 elsewhere.

Also, we use the notation $g_1 \sim g_2$ to mean that $g_1,g_2 \in \Gamma$ are conjugates, and $[g]$ to denote the conjugacy class of $g$ in $\Gamma$.

We now prove Theorem \ref{thm:mf-coeffs}.

\begin{proof}
Let $a$ be an integer relatively prime to $|\Gamma|$.
Since $c$ is proper, there exists a pure mass formula $f_a$ for all primes congruent to $a$ modulo $|\Gamma|$, by Proposition \ref{prop:Kedlaya-cong}.
There can be only one such pure mass formula, since if $f_a'$ were another, then $f_a$ and $f_a'$ would be two different Laurent polynomials which agree at infinitely many values,
which is impossible.

For convenience, we will assume from this point on that $c$ is a weighted discriminant counting function with weight function $w$.
In the general case, we are dealing only with tame ramification (and thus cyclic ramification groups) and $c$ is proper.  Thus $c$ depends only on a quantity similar to the overall weight for each conjugacy class of cyclic subgroups.  The argument is then the same, but with the overall weights $\ow(C)$ replaced by these other quantities.

From the argument in the proof of Proposition \ref{prop:Kedlaya-cong}, we can see that
\[
f_a(x) = \sum_C n_C x^{-\ow(C)},
\]
where the sum runs over conjugacy classes of cyclic subgroups of $\Gamma$, and
$n_C$ is the number of maps $G_{\Q_p} \to \Gamma$ with $p \equiv a \bmod |\Gamma|$ whose inertia group is conjugate to $C$.

Now let $f = \sum \sigma_i x^i$, a character Laurent polynomial, be a tame mass formula for $c$.  
If $p \equiv a \bmod |\Gamma|$, then we must have $f(p) = f_a(p)$, so the only exponents appearing in the sum are $-\ow(C)$ for cyclic subgroups $C \subseteq \Gamma$.  That is, $f$ is of the form
\[
f(x) = \sum_C \left( \sum_{\chi_j} b_{C,j} \chi_j(x) \right) x^{-\ow(C)},
\]
where the inner sum runs over all Dirichlet characters modulo divisors of $|\Gamma|$.
If $p$ is sufficiently large compared to all the $b_{C,j}$ and $n_C$, then letting $x=p$, it follows that for every $C$, we must have
\begin{equation}
	\sum_{\{C': \ow(C') = \ow(C)\}} \sum_{\chi_j} b_{C',j} \chi_j(p) = \sum_{\{C': \ow(C') = \ow(C)\}} n_{C'}.
\end{equation}

% TODO Fix this notation?

Since the $\chi_j$ are periodic, this must in fact hold for all $p \equiv a \bmod |\Gamma|$.
That is, the value of each ``coefficient'' $\sigma_i$ in $f$ on each $a \in (\Z/|\Gamma|\Z)^*$ is determined by the corresponding coefficient of $f_a$.

Since $(\Z/|\Gamma|\Z)^*$ is abelian, its conjugacy classes each contain a single element, so each $\sigma_i$ is in fact a class function on $(\Z/|\Gamma|\Z)^*$.
Thus $\sigma_i$ is uniquely expressible as a $\C$-linear combination of irreducible characters of $(\Z/|\Gamma|\Z)^*$.
Finally, each irreducible character of $(\Z/|\Gamma|\Z)^*$ is equal to a unique (minimal) Dirichlet character whose modulus divides $|\Gamma|$.

% Old text, reworked.
%Each coefficient of $f$ is a function on the conjugacy classes (i.e. the elements) of $(\Z/|\Gamma|\Z)^*$, and its value on each $a \in (\Z/|\Gamma|\Z)^*$ is determined by the corresponding coefficient of $f_a$.
%Thus, for each coefficient of $f$, there is a unique $\C$-linear combination of irreducible characters of $(\Z/|\Gamma|\Z)^*$ that makes that coefficient agree with the corresponding coefficient of each of the $f_a$.

This shows that $f$ can be expressed uniquely in the form
\[
f(x) = \sum_C \sigma_C(x) x^{-\ow(C)},
\]
where each $\sigma_C$ is a complex linear combination of Dirichlet characters modulo divisors of $|\Gamma|$.

To complete the proof of Theorem \ref{thm:mf-coeffs}, we now show that that each coefficient $\sigma_C$ in $f$ is a sum of distinct characters, one of which is the trivial character.

Let $p$ be a prime not dividing $|\Gamma|$, and let $f_p$ be the pure mass formula for the set of primes congruent to $p$ modulo $|\Gamma|$,
as discussed above.  As described above, $f_p$ also has a term corresponding to each conjugacy class of cyclic subgroups of $\Gamma$, although the exponents of these terms are not necessarily distinct.

Let $\gamma$ be an element of $\Gamma$.
The coefficient $\sigma_{\langle \gamma \rangle}(p)$ is $\frac{1}{|\Gamma|}$ times the number of maps $G_{\Q_p} \to \Gamma$ with inertia group conjugate to $\langle \gamma \rangle$.
Each such map is specified by an ordered pair $(s,t) \in \Gamma^2$, where $\langle t \rangle$ is conjugate to $\langle \gamma \rangle$, and $sts^{-1} = t^p$.
(In the language of number fields, $t$ is the generator of inertia, and $s$ is the Frobenius element.)

If $\gamma^p \notin [\gamma]$, then $t^p \notin [t]$ for any $t$ with $\langle t \rangle$ conjugate to $\langle \gamma \rangle$, so there are no such pairs.
Otherwise, the number of choices for $t$ is the number of elements of $\Gamma$ generating a subgroup conjugate to $\langle \gamma \rangle$,
and the number of choices for $s$ is equal to the number of elements of $\cent(\gamma)$, the centralizer of $\gamma$ in $\Gamma$.

In the latter case, let $n = \operatorname{ord}_\Gamma (\gamma)$.
Suppose $\gamma \sim \gamma^a$ and $\gamma \sim \gamma^b$ with $a$ and $b$ are coprime to $n$.  Choose $g_1,g_2 \in \Gamma$ with $g_1 \gamma g_1^{-1} = \gamma^a$, and $g_2 \gamma g_2^{-1} = \gamma^b$.  Then
\[
g_2 g_1 \gamma g_1^{-1} g_2^{-1} = g_2 \gamma^a g_2^{-1} = \gamma^{ab}.
\]
Thus $[\gamma] \cap \langle \gamma \rangle$ is naturally in bijection with a subgroup $S \subseteq (\Z / n \Z)^*$, via $\gamma^k \mapsto k$.

We can now calculate $\sigma_{\langle \gamma \rangle}(p)$.
For each element of $[\gamma]$, we have one choice for $t$, but we also need to count elements of $\Gamma$ not in $[\gamma]$ but generating a subgroup conjugate to $\langle \gamma \rangle$.
Overall, then, a choice of $t$ is described by a choice of an element of $[\gamma]$ and a coset of $S$ in $(\Z / n \Z)^*$.
The number of choices for $s$, as above, is $|\cent(\gamma)|$.
It follows that 
\[
\sigma_{\langle \gamma \rangle}(p) = \frac{1}{|\Gamma|} \cdot |[\gamma]| \cdot \frac{|(\Z/n\Z)^*|}{|S|} \cdot |\cent(\gamma)| = \frac{|(\Z/n\Z)^*|}{|S|} = \left[ (\Z / n\Z)^* : S \right],
\]
since $|[\gamma]| \cdot |\cent(\gamma)| = |\Gamma|$.

Now, we have $\gamma \sim \gamma^p$ if and only if $p \in S$, when $p$ is taken as an element of $(\Z / n\Z)^*$.
Thus $\sigma_{\langle \gamma \rangle}(p)$ should be 0 if $p \notin S$, and $\left[ (\Z / n\Z)^* : S \right]$ if $p \in S$.

Let $\Sigma_{n,S}$ be the sum of all irreducible characters of $(\Z / n\Z)^*$ that are trivial on $S$.
By Proposition \ref{prop:char-sums}, $\sigma_{\langle \gamma \rangle}(p) = \Sigma_{n,S}(p)$.
Thus $\Sigma_{n,S}$ is equal to $\sigma_{\langle \gamma \rangle}$, the ``coefficient'' of $f$ corresponding to the conjugacy class of $\langle \gamma \rangle$.
Finally, $\Sigma_{n,S}$ is a sum of distinct Dirichlet characters including the trivial character, as desired.

\end{proof}

\section{Proof of Theorem \ref{thm:finiteMF}}
\label{sec:big-proof}

We now are equipped to prove our main theorem, Theorem \ref{thm:finiteMF}.
Let $\Gamma$ be an $\ell$-group, and $c$ a natural weighted discriminant counting function for $\Gamma$, with weight function $w$, and corresponding weighted discriminant $D_w$.
Assume that $c$ has a universal mass formula $f$.
Our method of proof will be to put an upper bound on each of the overall weights given by $w$.
Since $c$ is natural (which implies all the overall weights are nonnegative integers), this fact, combined with Propositions \ref{prop:overall-only} and \ref{prop:overall-natural}, will show that there are only finitely many choices for the overall weights and thus for $c$.

\subsection{Preliminaries}
\label{subsec:prelims}

If $f$ is universal, it must be exactly the unique tame mass formula described in Theorem \ref{thm:mf-coeffs}.
Let $[C_1],\ldots,[C_s]$ be the conjugacy classes of cyclic subgroups of $\Gamma$.
By Theorem \ref{thm:mf-coeffs}, $f$ is of the form
\[
f(p) = \sum_{[C_j]} \sigma_{C_j}(p) p^{-\ow(C_j)},
\]
where each $\sigma_{C_j}$ is a sum of Dirichlet characters, exactly one of which is trivial.
Since each nontrivial character vanishes at $\ell$, we have
\[
f(\ell) = \sum_{[C_j]} \ell^{-\ow(C_j)}.
\]
The mass formula $f$ is universal if and only if this quantity is equal to the total mass of $c$ at $\ell$.

Note that $f(\ell)$ need only be numerically equal to the total mass; the two quantities will never be abstractly the same poylnomial in $\ell$.
For example, if we take $\Gamma = C_2$, and $D_{w}$ to be the standard discriminant, then the tame mass formula is
\[
f(p) = 1 + p^{-1}.
\]
At $\ell=2$, there are two quadratic extensions of $\Q_2$ of discriminant 4 and four extensions of discriminant 8 \cite{Jones-DB}, so the total mass is
\[
1 + \ell^{-2} + 2 \ell^{-3}.
\]
However, since
\[
1 + 2^{-1} = 1 + 2^{-2} + 2 \cdot 2^{-3} = \frac{3}{2},
\]
the mass formula $1 + p^{-1}$ is universal.

\subsection{The Universality Equation}
\label{subsec:MFF}

Since no mass formula for $c$ other than $f$ (the one given by Theorem \ref{thm:mf-coeffs}) can be universal, $c$ has a universal mass formula if and only if these overall weights satisfy the ``universality equation''

\begin{equation}
	\label{eqn:MFF} \sum_{[C_j]} \ell^{-\ow(C_j)} = \sum_{\phi:G_{\Q_\ell} \to \Gamma} \ell^{-c(\phi)}.
\end{equation}
Note that by the arguments in Section \ref{sec:overall-wt}, each exponent on the right side is a linear combination (with rational coefficients) of the overall weights of cyclic subgroups.

Furthermore, since $c$ is completely determined by the overall weights it associates to cyclic subgroups (by Proposition \ref{prop:overall-only}),
a weighted discriminant counting function with a universal mass formula is equivalent to a choice of $(\ow(C_1),\ldots,\ow(C_s))$ satisfying this equation.
Once we have studied the right side of the universality equation, we then prove Theorem \ref{thm:finiteMF} by showing that it has finitely many nonnegative integer solutions for $(\ow(C_1),\ldots,\ow(C_s))$.

The following lemma helps make sense of the right-hand side of equation (\ref{eqn:MFF}):

\begin{lemma}
	\label{lemma:extn-exist}
	There exists a totally ramified cyclic extension of $\Q_\ell$ of degree $\ell^k$ for each $k$.
	Equivalently, there exists a surjective totally ramified map $\phi: G_{\Q_\ell} \to C_{\ell^k}$ for each $k$.
\end{lemma}

\begin{proof}
	We can construct such an extension using cyclotomic extensions of $\Q_\ell$.
	Adjoining a primitive $\ell^m$th root of unity gives a totally ramified extension with Galois group $(\Z/\ell^m\Z)^*$ \cite{serre-LF}.
	If $m$ is large enough, then $(\Z/\ell^m\Z)^*$ has a subgroup for which the quotient is isomorphic to $C_{\ell^k}$; by taking the corresponding subfield, we obtain the desired extension.
\end{proof}

If $C_j$ is any cyclic subgroup of $\Gamma$, and $\phi_j$ is the map in Lemma \ref{lemma:extn-exist}, then $c(\phi_j) \ge 2 \ow(C_j)$ by Corollary \ref{cor:natural-double}.
On the right side of equation (\ref{eqn:MFF}), we can split off all the maps obtained from Lemma \ref{lemma:extn-exist} to produce
\begin{align}	
	\sum_{[C_j]} \ell^{-\ow(C_j)} &= \sum_{[C_j]} \ell^{-c(\phi_j)} + \sum_{\text{other } \phi} \ell^{-c(\phi)} \\	\label{eqn:MFF2}
	&= \sum_{[C_j]} \ell^{-b_j \ow(C_j)} + \sum_{\text{other } \phi} \ell^{-c(\phi)},
\end{align}
where each $b_j$ is at least 2.

We will now put upper bounds on the overall weights $\ow(C_j)$ by studying the $\ell$-adic valuation of each side of equation (\ref{eqn:MFF2}).

\subsection{The Upper Bound}
\label{subsec:ubounds}

Let $M$ be the greatest of the $\ow(C_j)$, and let $t$ be the total number of terms on the right side of equation (\ref{eqn:MFF2}).

Each term on the right is a power of $\ell$, and one of them has $\ell$-adic valuation less than or equal to $-2M$.
Since each power of $\ell$ added to this term can increase the valuation by at most 1, the largest possible valuation of the right side is then $-2M + t - 1$.

Meanwhile, the valuation of the left side is greater than or equal to $-M$, since no term has a valuation smaller than this.
Thus for $f$ to be universal, we must have
\[
-M \le -2M + t - 1.
\]
This implies that $M \le t - 1$.
Thus there is an upper bound on the overall weights of cyclic subgroups of $\Gamma$, which completes the proof of Theorem \ref{thm:finiteMF}.

\section{Proof of Proposition \ref{prop:any-overall}}
\label{sec:any-overall-proof}

We now fill in the proof of Proposition \ref{prop:any-overall}.

Recall that the overall weight of a subgroup is
\[
\ow(I) = \sum_{(H,H')} \left( \frac{w(H,H')}{|I| \cdot |H'|} \sum_{\gamma \in \Gamma} \left( |\gamma I \gamma^{-1} \cap H| - |\gamma I \gamma^{-1} \cap H'| \right) \right).
\]
For notational convenience, let $w'(H,H') = \frac{w(H,H')}{|H'|}$, and $\ow'(I) = |I| \cdot \ow(I)$.
Also let
\[
u(I,(H,H')) = \sum_{\gamma \in \Gamma} \left| \gamma I \gamma^{-1} \cap (H \setminus H') \right|.
\]
Then equation (\ref{eqn:overall-defn}) simplifies to
\[
\ow'(I) = \sum_{(H,H')} w'(H,H') u(I,(H,H')).
\]

Let $[C_1],\ldots,[C_n]$ be the conjugacy classes of cyclic subgroups of $\Gamma$, and $P_1,\ldots,P_m$ be all the pairs $(H,H')$ such that $H'$ is maximal in $H$ and $H \subseteq \Gamma$.
Proposition \ref{prop:any-overall} is equivalent to the statement that given an ordered $n$-tuple of real numbers $(b_1,\ldots,b_n)$,
we can find $w'(P_j)$ for each $j$ such that $\sum_j w'(P_j) u(C_i,P_j) = b_i$ for each $i$.
In other words, the matrix $\left[ u(C_i,P_j) \right]$ (which has $u(C_i,P_j)$ in the $i$th row and $j$th column) has rank $n$.

Since we clearly have $m \ge n$, this is equivalent to the statement that the rows of this matrix are linearly independent.
That is, if there exist real numbers $a_{[C_i]}$, with
\begin{equation}
	\label{eqn:the-overall-system}
	\sum_{[C_i]} a_{[C_i]} u(C_i,P_j) = 0
\end{equation}
for all $j$, then $a_{[C_i]} = 0$ for all $i$.
This statement is what we will prove in the remainder of this section.

\subsection{Replacing Subgroups with Generating Elements}

For $C_i$ a cyclic subgroup of $\Gamma$, the number of conjugates of $C_i$ is $\frac{|\Gamma|}{|N_\Gamma(C_i)|}$, where $N_\Gamma(C_i)$ is the normalizer of $C_i$ in $\Gamma$.
Thus each conjugate of $C_i$ appears $|N_\Gamma (C_i)|$ times in the summation for $u(C_i,P_j)$.  Then the left-hand side of equation (\ref{eqn:the-overall-system}) becomes
\begin{equation}
	\label{eqn:all-cyclics}
	\sum_{[C_i]} a_{[C_i]} \sum_{\gamma} \left| \gamma C_i \gamma^{-1} \cap (H \setminus H') \right| = \sum_C a_{[C]} \cdot |N_\Gamma(C)| \cdot \left| C  \cap (H \setminus H') \right|,
\end{equation}
where the sum on the right side ranges over all cyclic subgroups of $\Gamma$.
The right side is also equal to
\[
\sum_C a_{[C]} \cdot |N_\Gamma(C)| \sum_{x \in C} \left| \{ x \}  \cap (H \setminus H') \right|,
\]
which, when we reverse the order of summation, becomes
\begin{equation}
	\label{eqn:rev-order-sum}
	\sum_{x \in \Gamma} \sum_{C \supseteq \langle x \rangle} a_{[C]} \cdot |N_\Gamma(C)| \cdot \left| \{ x \}  \cap (H \setminus H') \right|.
\end{equation}
Now if we set
\begin{equation}
	\label{eqn:a'-def}
	a'_{[\langle x \rangle]} = \sum_{C \supseteq \langle x \rangle} a_{[C]} \cdot |N_\Gamma(C)|
\end{equation}
for each $x \neq 1_\Gamma$, then equation (\ref{eqn:rev-order-sum}) can be written as
\[
\sum_{x \in \Gamma} a'_{[\langle x \rangle]} \left| \{ x \}  \cap (H \setminus H') \right| = \sum_{x \in H \setminus H'} a'_{[\langle x \rangle]}.
\]

We have now converted equation (\ref{eqn:the-overall-system}) into
\begin{equation}
	\label{eqn:the-overall-system-2}
	\sum_{x \in H \setminus H'} a'_{[\langle x \rangle]} = 0
\end{equation}
for all pairs $(H,H')$.

\subsection{Finishing the Proof}

To make use of equation (\ref{eqn:the-overall-system-2}), we need the following lemma:

\begin{lemma}
	If $a'_{[\langle x \rangle]} = 0$ for every $x \neq 1_\Gamma$, then $a_{[C]} = 0$ for every nontrivial cyclic $C \subseteq \Gamma$.
\end{lemma}

\begin{proof}
	We induct on the number of cyclic subgroups of $\Gamma$ containing $C$.
	If the only such subgroup is $C$ itself, then in equation (\ref{eqn:a'-def}), let $C = \langle x \rangle$.  We then obtain $a_{[C]} = \frac{1}{|N_\Gamma(C)|} a'_{[C]} = 0$.
	Otherwise,
	\[
	0 = a'_{[C]} = |N_\Gamma(C)| a_{[C]} + \sum_{C' \supsetneq C} a_{[C']} \cdot |N_\Gamma(C')|.
	\]
	Each $C'$ in the sum on the right is contained in fewer cyclic subgroups than $C$, so by the inductive hypothesis, $a_{[C']} = 0$, and the sum vanishes.
	It follows that $a_{[C]} = 0$, as desired.
\end{proof}

Now it suffices to prove that if 
\[
\sum_{x \in H \setminus H'} a'_{[\langle x \rangle]} = 0
\]
for each pair $(H,H')$, then $a'_{[\langle x \rangle]} = 0$ for every $x \neq 1_\Gamma$.
To do this, suppose that the order of $x \in \Gamma$ is $\prod p_k^{r_k}$ where the $p_k$ are distinct primes.
Let $S(x) = \sum r_k$.  We will induct on $S(x)$.

If $S(x) = 1$, then let $H = \langle x \rangle$ and $H' = 1$.
Since $H$ is cyclic of prime order, every nonidentity element of $H$ is a generator of $H$, so we have
\[
\sum_{x \in H \setminus H'} a'_{[\langle x \rangle]} = \left( |H| - 1 \right) a'_{[H]} = 0,
\]
and thus $a'_{[\langle x \rangle]} = a'_{[H]} = 0$, as desired.

Otherwise, let $H = \langle x \rangle$ and $H'$ be any maximal subgroup of $H$.
If $h \in \langle x \rangle$, then either $h$ generates $\langle x \rangle$, or the order of $h$ is a proper divisor of the order of $x$, from which $S(h) < S(x)$.
Thus either $a'_{[\langle h \rangle]} = a'_{[\langle x \rangle]}$ or $a'_{[\langle h \rangle]} = 0$ (by the inductive hypothesis).
Then
\[
\sum_{x \in H \setminus H'} a'_{[\langle x \rangle]} = B a'_{[\langle x \rangle]} = 0,
\]
where $B$ is the number of generators of $\langle x \rangle$.

Thus $a'_{[\langle x \rangle]} = 0$ for all $x \in \Gamma$.  This completes the proof of Proposition \ref{prop:any-overall}.

\section{Example: Mass Formulas for $D_4$ and $Q_8$}
\label{sec:mfd4}

To illustrate the implications of Theorem \ref{thm:finiteMF}, we now use it to find all weighted discriminant counting functions for $\Gamma = D_4$ that have universal mass formulas.

$D_4$ has four conjugacy classes of cyclic subgroups: two non-central copies of $C_2$, the center (also isomorphic to $C_2$), and one isomorphic to $C_4$.
We denote the overall weights of these by $w_{2a}$, $w_{2b}$, $w_{2c}$, and $w_4$, respectively.

\subsection{Overall Weights}

First, we must compute the overall weights of the three noncyclic subgroups of $D_4$ (itself and two nonconjugate copies of $V_4$)
in terms of $w_{2a}$, $w_{2b}$, $w_{2c}$, and $w_4$, using Proposition \ref{prop:overall-only}.

Let $I$ be the copy of $V_4$ generated by the subgroups whose overall weights are $w_{2a}$ and $w_{2c}$.
The $n=1$ term in equation (\ref{eqn:overall-cyclic}) contributes $\frac{1}{2} w_{2a} + \frac{1}{2} w_{2a} + \frac{1}{2} w_{2c}$ to $\ow(I)$,
since $I$ contains two conjugate copies of $C_2$ whose overall weight is each $w_{2a}$.
All other terms are zero, since the intersection of any two cyclic subgroups of $I$ is trivial.
Thus the overall weight of $I$ is
\[
\ow(I) = w_{2a} + \frac{1}{2} w_{2c}.
\]
By the same argument, the overall weight of the other copy of $V_4$ is $w_{2b} + \frac{1}{2} w_{2c}$.

Now let $I=D_4$.  The $n=1$ term in equation (\ref{eqn:overall-cyclic}) contributes $\frac{2}{4} w_{2a} + \frac{2}{4} w_{2b} + \frac{1}{4} w_{2c} + \frac{1}{2} w_4$.
The $n=2$ term contributes $-\frac{1}{4} w_{2c}$, and all other terms are zero.
Thus the overall weight of $I$ is
\[
\ow(D_4) = \frac{1}{2} w_{2a} + \frac{1}{2} w_{2b} + \frac{1}{2} w_4.
\]

\subsection{Extensions of $\Q_2$ and the Universality Equation}

Now we use \cite{Jones-DB} to find all maps from $G_{\Q_2}$ to $D_4$.
First, we list all field extensions of $\Q_2$ whose Galois group is a subgroup of $D_4$, and their (lower-numbered) ramification filtrations.  Each of these corresponds to several maps $G_{\Q_2} \to D_4$, and the number of maps per extension is the number of injections from the Galois group of the extension into $D_4$.
Note that several extensions have the same ramification filtration; since our counting functions do not depend on the Galois (i.e. decomposition) group, we can consider all such maps together.

The ramification filtrations listed below begin with the inertia group.  There are:
\begin{itemize}
	\item 2 $C_2$-extensions, 1 $C_4$-extension, and 1 $V_4$-extension with filtration $C_2 \supseteq C_2$, corresponding to 24 maps $G_{\Q_2} \to D_4$;
	\item 4 $C_2$-extensions, 2 $C_4$-extensions, and 2 $V_4$-extensions with filtration $C_2 \supseteq C_2 \supseteq C_2$, corresponding to 48 maps to $D_4$;
	\item 4 $V_4$-extensions with filtration $V_4 \supseteq V_4 \supseteq C_2 \supseteq C_2$, corresponding to 48 maps to $D_4$.
\end{itemize}

There are three conjugacy classes of subgroups of $D_4$ isomorphic to $C_2$; in the above list, the $C_2$'s in the ramification filtration map into each class equally often.  The same is true of the two conjugacy classes isomorphic to $V_4$.

There are also:
\begin{itemize}
	\item 8 $C_4$-extensions and 2 $D_4$ extensions with filtration $C_4 \supseteq C_4 \supseteq C_4 \supseteq C_2 \supseteq C_2$, corresponding to 32 maps to $D_4$;
	\item 2 $D_4$-extensions with filtration $V_4 \supseteq V_4$, corresponding to 16 maps to $D_4$;
	\item 2 $D_4$-extensions with filtration $V_4 \supseteq V_4 \supseteq C_2 \supseteq C_2$, corresponding to 16 maps to $D_4$;
	\item 8 $D_4$-extensions with filtration $D_4 \supseteq D_4 \supseteq V_4 \supseteq V_4 \supseteq C_2 \supseteq C_2$, corresponding to 64 maps to $D_4$;
	\item 4 $D_4$-extensions with filtration $D_4 \supseteq D_4 \supseteq C_4 \supseteq C_4 \supseteq C_2 \supseteq C_2 \supseteq C_2 \supseteq C_2$, corresponding to 32 maps to $D_4$.
\end{itemize}

In the second list, all $C_2$'s in the filtration must map into the center of $D_4$, but $V_4$ again maps equally often into each conjugacy class.

Theorem \ref{thm:mf-coeffs} allows us to calculate one side of the universality equation, and the above list gives the other side.
The universality equation is

\begin{align*}
	2^{-w_{2a}} &+ 2^{-w_{2b}} + 2^{-w_{2c}} + 2^{-w_4} \\
	& = 2^{-2w_{2a}} + 2^{-2w_{2b}} + 2^{-2w_{2c}} + 2 \cdot 2^{-3w_{2a}} + 2 \cdot 2^{-3w_{2b}} + 2 \cdot 2^{-3w_{2c}} \nonumber \\
	& + 2 \cdot 2^{-(3w_{2a} + w_{2c})} + 2 \cdot 2^{-(3w_{2b} + w_{2c})} + 2 \cdot 2^{-(2w_{2a} + 2w_{2c})}  \nonumber \\
	& + 2 \cdot 2^{-(2w_{2b} + 2w_{2c})} + 2^{-(2w_{2a} + w_{2c})} + 2^{-(2w_{2b} + w_{2c})} + 4 \cdot 2^{-(3w_4 + w_{2c})} \nonumber \\
	& + 4 \cdot 2^{-(w_{2a} + w_{2b} + 2w_4 + w_{2c})} + 4 \cdot 2^{-(2w_{2a} + w_{2b} + w_{2c} + w_4)} \nonumber \\
	&  + 4 \cdot 2^{-(w_{2a} + 2w_{2b} + w_{2c} + w_4)}. \nonumber
\end{align*}

\subsection{Results}

Following the proof of Theorem \ref{thm:finiteMF}, let $m$ be the greatest of $w_{2a}$, $w_{2b}$, $w_{2c}$, and $w_4$.
The 2-adic valuation of the left-hand side of the universality equation is at least $-m$.
The right-hand side is a sum of 16 powers of 2, and one of them has valuation at most $-3m$, so the right side cannot have valuation larger than $-3m + 15$.
Thus $-m \le -3m + 15$, from which $m \le 7$.

An exhaustive search (carried out using Sage) shows that the only positive integer solution to the universality equation is $w_{2a} = w_{2b} = 1$ and $w_{2c} = w_4 = 2$.
The corresponding counting function is the same as the one given by Wood in \cite{MMW-MF}, which comes from the wreath product structure of $D_4 \simeq C_2 \wr C_2$.

In fact, this counting function can also be constructed from the Artin conductor of the irreducible 2-dimensional representation of $D_4$, as described in Section \ref{sec:further-work}.  A recent result of Altug-Shankar-Varma-Wilson \cite{ASVW-D4} gives asymptotics for $D_4$ fields counted by the resulting alternate discriminant, which match the predictions of the Malle-Bhargava heuristics.

\subsection{Weighted Discriminants for $Q_8$}
\label{subsec:mfq8}
As a second example, let $\Gamma = Q_8$, the quaternion group.

As with $D_4$, $Q_8$ has four conjugacy classes of nontrivial cyclic subgroups.  Let $w_a$, $w_b$, and $w_c$ be the overall weights of the three cyclic subgroups of order 4, and $w_2$ be the overall weight of the subgroup of order 2 (which is the center of $Q_8$).  A calculation similar to that for $D_4$ gives the universality equation:

\begin{align*}
	2^{-w_a} &+ 2^{-w_b} + 2^{-w_c} + 2^{-w_2} \\
	&= 2^{-2w_2} + 2 \cdot 2^{-3w_2} + 4 \cdot 2^{-(w_2 + 3w_a)} + 4 \cdot 2^{-(w_2 + 3w_b)} + 4 \cdot 2^{-(w_2 + 3w_c)} \\
	&+ 4 \cdot 2^{-(2w_a + w_b + w_c)} + 4 \cdot 2^{-(w_a + 2w_b + w_c)} + 4 \cdot 2^{-(w_a + w_b + 2w_c)}.	
\end{align*}

If $m$ is the greatest overall weight, then the 2-adic valuation of the left-hand side of the universality equation is at least $-m$, and the valuation of the right-hand side is at most $-3m + 9$, and it follows that $m \le 4$.  Another exhaustive search shows that the only solution is $w_2 = w_a = w_b = w_c = 1$.  The corresponding counting function is also the Artin conductor of \textit{one-half} the irreducible 2-dimensional character of $Q_8$.

\begin{remark}
	There is no integer-valued weight function that produces these overall weights.  This illustrates why we require only the counting function to be integer-valued, rather than the weight function.
\end{remark}

\section{Further Questions}
\label{sec:further-work}

\subsection{Extending Theorem \ref{thm:finiteMF}}

It appears likely that Theorem \ref{thm:finiteMF} also holds for groups whose order is not a prime power.  In many small cases, including $C_6$, $S_3$, $C_{10}$, and $D_5$, the techniques of Section \ref{sec:big-proof} can be adapted \textit{ad hoc} to bound the overall weights. In fact, this may be possible whenever all the elements of $\Gamma$ have prime-power order.

However, groups like $C_{15}$ pose additional challenge.  Most importantly, there is no longer a single  ``universality equation''; instead, there is one such equation for each prime dividing $|\Gamma|$.  Additionally, Corollary \ref{cor:natural-double}, which was a key ingredient in bounding the exponent of each term in equation (\ref{eqn:MFF2}), fails when $\Gamma$ is not an $\ell$-group.

One could also ask if Theorem \ref{thm:finiteMF} holds over base fields other than $\Q$.  It appears likely that it does; transferring the definitions and supporting results to another base field should require very little modification except for Lemma \ref{lemma:extn-exist}.  In the definition of a mass formula, for instance, one would replace the fields $\Q_p$ by the nonarchimedean completions of the base field, and replace $p$ by the residue characteristic.

\subsection{Artin Conductors}

All the counting functions we have considered so far have been weighted discriminant counting functions, originating from a weight function.  However, there is another interesting class of alternate discriminants we could consider, originating from Artin conductors.  If $\chi$ is a character of $\Gamma$, then for any map $\phi : G_{\Q_p} \to \Gamma$, let
\[
 c_\chi (\phi) = \mathfrak{f} (\chi \circ \phi),
\]
where $\mathfrak{f}$ is the Artin conductor.  This $c_{\chi}$ is a proper counting function for $\Gamma$.

Based on Theorem \ref{thm:finiteMF}, and the rarity of universal mass formulas for small groups $\Gamma$ that we have examined in detail, we conjecture the following:
\begin{conj}
For any finite group $\Gamma$ (or at least any $p$-group), there are only finitely many characters $\chi$ for which the counting function $c_\chi$ has a universal mass formula.
\end{conj}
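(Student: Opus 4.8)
The plan is to imitate the proof of Theorem~\ref{thm-finiteMF}. The Artin conductor is additive, so if $\chi_1,\dots,\chi_k$ are the irreducible characters of $\Gamma$ and $\chi=\sum_i m_i\chi_i$ with each $m_i\ge 0$, then $c_\chi=\sum_i m_i c_{\chi_i}$: the counting functions of the form $c_\chi$ are exactly the non-negative integer combinations of the $c_{\chi_i}$, a finitely generated sub-monoid of the lattice of proper counting functions. Since $c_{\mathbf 1}=0$, adding copies of the trivial representation leaves $c_\chi$ unchanged, so the substantive claim is that only finitely many counting functions $c_\chi$ --- equivalently, only finitely many characters $\chi$ with no trivial-representation summand --- admit a universal mass formula, and this is what I would prove. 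Each $c_\chi$ is proper and takes only non-negative values, so by Theorem~\ref{thm:mf-coeffs} it has a unique tame mass formula $f$; as in (\ref{subsec:prelims}), $f$ is universal if and only if $f(\ell)$ equals the total mass of $c_\chi$ at $\ell$ for every prime $\ell$ dividing $|\Gamma|$.

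Assume first that $\Gamma$ is an $\ell$-group. The key input is the Artin-conductor analogue of Corollary~\ref{cor:cHH-double}. By Serre's local conductor formula, $\mathfrak f(V)=\sum_{i\ge 0}[G_0:G_i]^{-1}\operatorname{codim} V^{G_i}$ in lower numbering; at the prime $\ell$ the tame quotient $G_0/G_1$ of the inertia image is simultaneously prime-to-$\ell$ and an $\ell$-group, hence trivial, so $G_0=G_1$ and the terms $i=0$ and $i=1$ each contribute $\operatorname{codim} V^{G_0}$. Thus a wildly ramified $\phi'_\ell\colon G_{\Q_\ell}\to\Gamma$ satisfies $\mathfrak f(\chi\circ\phi'_\ell)\ge 2\operatorname{codim} V^{G_0}$, whereas a tamely ramified $\phi_q$ with the same image of inertia has $\mathfrak f(\chi\circ\phi_q)=\operatorname{codim} V^{G_0}$; hence $c_\chi(\phi'_\ell)\ge 2\,c_\chi(\phi_q)$. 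Consequently, for a fixed $\Gamma$ the values $c_\chi(\phi)$ over all local $\phi$ lie among finitely many linear forms $L_1,\dots,L_r$ in the variables $m_i$ --- there are finitely many possible inertia images at tame primes and finitely many maps $G_{\Q_\ell}\to\Gamma$ --- exactly as in Proposition~\ref{prop:finite-LCs}. One then writes down equation~(\ref{eqn:MFF2}) for $c_\chi$ (the totally wildly ramified map of inertia image $C_j$ required on its right side is furnished by a cyclotomic extension of $\Q_\ell$, as in (\ref{subsec:MFF})) and runs the $\ell$-adic valuation argument of (\ref{subsec:ubounds}) verbatim, bounding every $L_i$ by a constant depending only on $\Gamma$.

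It remains to pass from bounded values of the $L_i$ to finitely many characters. The bounds confine $c_\chi$ to finitely many points of the counting-function lattice, so it suffices to check that each counting function arises from only finitely many $\chi$ without a trivial summand. If $c_\chi=c_{\chi'}$, then $\operatorname{codim} V_\chi^C=\operatorname{codim} V_{\chi'}^C$ for every cyclic subgroup $C\subseteq\Gamma$, since such a $C$ occurs as the image of inertia of some tamely ramified $\phi_q$ (e.g.\ at a prime $q\equiv 1\bmod|\Gamma|$). By Frobenius reciprocity this says $\langle\psi-\psi(1)\mathbf 1, \mathrm{Ind}_C^\Gamma\mathbf 1\rangle=0$ for all cyclic $C$, where $\psi=\chi-\chi'$; as the permutation characters $\mathrm{Ind}_C^\Gamma\mathbf 1$ span the rational-valued virtual characters (a form of Artin's induction theorem), $\psi-\psi(1)\mathbf 1$ is a combination of Galois-differences $\chi_i-\sigma\chi_i$ of irreducibles, and since neither $\chi$ nor $\chi'$ contains $\mathbf 1$ so is $\psi$. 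Such combinations form a subgroup meeting the cone of effective characters only at $0$, so the effective characters in any coset of it have bounded multiplicities, hence are finite in number. This establishes the conjecture, in the form above, for $\ell$-groups.

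The main obstacle is the general case, and I expect it to be precisely the one identified in Section~\ref{sec:further-work} for Question~\ref{q:all-groups}. When $\Gamma$ is not an $\ell$-group, equation~(\ref{eqn:MFF2}) is replaced by one equation for each prime $q$ dividing $|\Gamma|$, and the doubling inequality fails at any $q$ for which $\Gamma$ is not a $q$-group: there $G_0\ne G_1$ at a wildly ramified map over $\Q_q$, and the conductor formula yields only $\mathfrak f(\chi\circ\phi'_q)\ge\operatorname{codim} V^{G_1}$ with $G_1$ possibly far smaller than $G_0$, so a term on the left of the mass-formula equation need not be dominated by a doubled term on the right and the uniform bound on the $L_i$ is lost. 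One would hope to recover bounds by the same \emph{ad hoc} arguments that handle $C_6$, $S_3$, $C_{10}$, and $D_5$ --- plausibly whenever every element of $\Gamma$ has prime-power order --- but, as for weighted discriminants, $C_{15}$ is the first case with no evident approach, so the general conjecture is at least as hard as Question~\ref{q:all-groups}.
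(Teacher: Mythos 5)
This statement is a conjecture in the paper; no proof of it is given there, so there is no argument of the author's to compare yours against. Judged on its own terms, your sketch is a sensible and, as far as I can check, sound adaptation of the proof of Theorem \ref{thm-finiteMF} to the $\ell$-group case: the inequality $\mathfrak{f}(\chi\circ\phi'_\ell)\ge 2\,\mathfrak{f}(\chi\circ\phi_q)$, obtained from Serre's conductor formula together with the observation that $G_0=G_1$ for any ramified map to an $\ell$-group over $\Q_\ell$, is exactly the right analogue of Corollary \ref{cor:cHH-double}, with positivity of the multiplicities $m_i$ playing the role of positivity of the weights; the reduction to finitely many linear forms and the $\ell$-adic valuation bound then do transfer as you say. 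The passage from finitely many possible counting functions back to finitely many characters --- via Frobenius reciprocity, Artin induction from cyclic subgroups, and the fact that the span of the Galois differences $\chi_i-\sigma\chi_i$ meets the effective cone only in $0$ --- is a genuine extra step that the weighted-discriminant argument does not need, and you supply it correctly. Your preliminary observation about trivial summands is also substantive: read literally, the conjecture fails already for $\Gamma=C_2$ (add copies of $\mathbf{1}$ to the nontrivial character, whose $c_\chi$ is the discriminant-exponent counting function with universal mass formula $1+p^{-1}$), so the reformulation ``finitely many counting functions $c_\chi$, equivalently finitely many $\chi$ with no trivial constituent'' is the right reading and worth making explicit.

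The gap is simply that what you prove is only the parenthetical weak form. The conjecture as stated is for all finite groups $\Gamma$, and your final paragraph concedes --- correctly --- that outside the $\ell$-group case the doubling inequality fails at any prime $q$ for which $\Gamma$ is not a $q$-group, there is one mass equation for each prime dividing $|\Gamma|$, and no bound on the linear forms is obtained; $C_{15}$ already defeats the method, exactly as it does for Question \ref{q:all-groups}. So the proposal establishes (modulo the detail-checking of the valuation argument, which you import verbatim, including the implicit assumption that some tame form is positive --- automatic here once $\chi$ has a nontrivial constituent) the $p$-group case only, and gives no new purchase on the general conjecture.
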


It would also be interesting to study the relationship between Artin-conductor counting functions and weighted discriminant counting functions.  For example, with $\Gamma = C_2 \times C_2$, every weighted discriminant counting function with integer overall weights is an Artin-conductor counting function, and vice versa (as long as we allow ``virtual characters'', linear combinations of irreducible characters with some negative coefficients).  We may even wish to allow non-integer coefficients, as in the case of $Q_8$ discussed above.

This is certainly not always the case, though; if $\Gamma$ does not have a rational character table, then $\Gamma$ has more characters than conjugacy classes of cyclic subgroups \cite{Serre:topics-in-GT}, so the lattice of Artin-conductor counting functions should have higher rank than the lattice of weighted discriminant counting functions.

%
%
%
%
%
% CITE EVAN'S RHO-DISRIMINANT HERE?
%
%
%
%
%

\subsection{Infinite Weights}

Also of interest is expanding the definition of a weight function to allow $\infty$ as a weight.
Suppose $w(H,H') = \infty$.  It follows that for any $\phi:\Q_p \to \Gamma$, if 
\[
 \phi(I_p) \cap \gamma H' \gamma^{-1} \neq \phi(I_p) \cap \gamma H \gamma^{-1},
\]
where $I_p$ denotes the inertia subgroup of $G_{Q_p}$, then $\cw(\phi) = \infty$.
In terms of alternate discriminants, this means that any field $K$ in which any prime above $p$ is ramified in the extension $K_{H'}/K_H$ is assigned a value of $D_{\cw} = \infty$.  If we count number fields by $D_{\cw}$, then $K$ will be excluded from the count entirely.

Careful choices of which weights (or overall weights) to set equal to $\infty$ allows us to exclude fields with certain types of ramification.
Most notably, if $G$ and $A$ are finite groups with $A$ abelian, then we can in some cases use this technique to rephrase questions about unramified $A$-extensions of $G$-extensions of $\Q$
as questions about counting number fields by some alternate discriminant.

For example, if $G = C_2$ and $A$ is any finite abelian group, then any unramified $A$-extension of a quadratic field has Galois group $\Gamma = A \rtimes C_2$.
By choosing appropriate weights, we can turn the study of the $A$-moment of class groups of quadratic fields into the study of $\Gamma$-extensions of $\Q$, counted by an alternate discriminant.
Unfortunately, it is not always this simple to pin down what $\Gamma$ must be, so this technique becomes much harder to use for more complicated Cohen-Lenstra-type questions.

\end{document}